\theoremstyle{plain}
\newtheorem{lem}{Lemma}
\newtheorem{prop}[lem]{Proposition}
\newtheorem{thm}[lem]{Theorem}
\newtheorem*{thmA}{Theorem A}
\newtheorem*{corA'}{Corollary A'}
\newtheorem*{thmB}{Theorem B}
\newtheorem*{thmC}{Theorem C}
\newtheorem*{DWthm}{Denjoy--Wolff Theorem}
\newtheorem*{Cowenthm}{Cowen's Theorem}
\newtheorem*{Schwarzlem}{Schwarz--Pick's Lemma}
\theoremstyle{definition}
\newtheorem{defn}[lem]{Definition}
\newtheorem*{defn*}{Definition}
\newtheorem*{ex*}{Example}
\newtheorem{rem}[lem]{Remark}
\newtheorem*{rem*}{Remark}
\theoremstyle{remark}
\DeclareMathOperator{\dist}{dist}
\DeclareMathOperator{\Arg}{Arg}
\DeclareMathOperator{\ext}{ext}
\newcommand{\C}{\mathbb C}
\newcommand{\D}{\mathbb D}
\newcommand{\HH}{\mathbb H}
\newcommand{\clC}{\overline \C}
\newcommand{\Z}{\mathbb Z}
\newcommand{\N}{\mathbb N}
\newcommand{\DD}{\mathcal D}
\newcommand{\PP}{\mathcal P}
\newcommand{\bd}{\partial}
\renewcommand{\Re}{\textup{Re}}
\renewcommand{\Im}{\textup{Im}}
\begin{document}

\title{Absorbing sets and Baker domains for holomorphic maps}

\date{}

\author{Krzysztof Bara\'nski}
\address{Institute of Mathematics, University of Warsaw,
ul.~Banacha~2, 02-097 Warszawa, Poland}
\email{baranski@mimuw.edu.pl}

\author{N\'uria Fagella}
\address{Departament de Matem\`atica Aplicada i An\`alisi,
Universitat de Barcelona, 08007 Barce\-lona, Spain}
\email{fagella@maia.ub.es}

\author{Xavier Jarque}
\address{Departament de Matem\`atica Aplicada i An\`alisi,
Universitat de Barcelona, 08007 Barce\-lona, Spain}
\email{xavier.jarque@ub.edu}

\author{Bogus{\l}awa Karpi\'nska}
\address{Faculty of Mathematics and Information Science, Warsaw
University of Technology, ul.~Koszykowa~75, 00-662 Warszawa, Poland}
\email{bkarpin@mini.pw.edu.pl}

\thanks{Supported by Polish NCN grant decision DEC-2012/06/M/ST1/00168. The second and third authors were partially supported by the Catalan grant 2009SGR-792, and by the Spanish grant MTM2011-26995-C02-02.}
\subjclass[2010]{Primary 30D05, 37F10, 30D30. Secondary 30F20, 30F45}

\bibliographystyle{plain}

\begin{abstract} We consider holomorphic maps $f: U \to U$ for a hyperbolic domain $U$ in the complex plane, such that the iterates of $f$ converge to a boundary point $\zeta$ of $U$. By a previous result of the authors, for such maps there exist nice absorbing domains $W \subset U$. In this paper we show that $W$ can be chosen to be simply connected, if $f$ has parabolic~I type in the sense of the Baker--Pommerenke--Cowen classification of its lift by a universal covering (and $\zeta$ is not an isolated boundary point of $U$). Moreover, we provide counterexamples for other types of the map $f$ and give an exact characterization of parabolic~I type in terms of the dynamical behaviour of $f$.
\end{abstract}

\maketitle

\section{Introduction}\label{sec:intro}

In this paper we study iterates $f^n = \underbrace{f\circ \cdots \circ f}_{n \textrm{ times}}$ of a holomorphic map
\[
f: U \to U,
\]
where $U$ is a hyperbolic domain in the complex plane $\C$ (i.e.~a domain whose complement in $\C$ contains at least two points) and $f$ has no fixed points, that is $f(z) \neq z$ for $z \in U$. In the special case when $U$ is the unit disc $\D$ or the right half-plane $\HH$, the dynamical behaviour of $f$ has been extensively studied, starting from the works of Denjoy, Wolff and Valiron in the 1920's and 1930's \cite{denjoy,valiron,valiron-book,wolff}. In particular, the celebrated Denjoy--Wolff Theorem asserts that the iterates of $f$ converge almost uniformly (i.e. uniformly on compact subsets of $\D$) as $n \to \infty$ to a point $\zeta$ in the boundary of $U$. Changing the coordinates by a M\"obius map, we can conveniently assume in this case $U = \HH$, $\zeta = \infty$. Some years later, Baker and Pommerenke \cite{baker-pommer,pommer} and Cowen \cite{cowen} proved that $f$ on $\HH$ is semi-conjugated to a M\"obius map $T: \Omega \to \Omega$ by a holomorphic map $\varphi: \HH \to \Omega$, where the 
following three cases can occur: 
\begin{itemize}
\item $\Omega = \C$, $T(\omega) = \omega + 1$ $($parabolic~I type$)$,
\item $\Omega = \HH$, $T(\omega) = \omega \pm i$ $($parabolic~II type$)$,
\item $\Omega = \HH$, $T(\omega) = a\omega$ for some $a > 1$ $($hyperbolic type$)$
\end{itemize}
(see Section~\ref{sec:back} for a precise formulation). 

For an arbitrary hyperbolic domain $U \subset \C$, the problem of describing the dynamics of a holomorphic map $f: U \to U$ without fixed points is more complicated. To this aim, one can consider a lift $g: \HH \to \HH$ of $f$ by a universal covering $\pi: \HH \to U$. Some results on the dynamics of $f$ were obtained by Marden and Pommerenke \cite{marden-pommer} and Bonfert \cite{bonfert}, who proved that if $f$ {\em has no isolated boundary fixed points} (i.e. points $\zeta$ in the boundary of $U$ in $\clC$ such that $f$ extends holomorphically to $U \cup \{\zeta\}$ as $f(\zeta) = \zeta$, see Definition \ref{ibfp}),  then it is semi-conjugated to a M\"obius map on $\C$ or on a hyperbolic domain in $\C$. In 1999, K\"onig \cite{konig} extended the Baker--Pommerenke--Cowen result on the semiconjugacy of $f$ to a M\"obius map for the case, when $f^n \to \infty$ as $n \to \infty$ and every closed loop in $U$ is eventually contractible in $U$ under iteration of $f$. 

One can extend the classification of $f$ into the three types (parabolic~I, parabolic~II and hyperbolic), defining its type by the type of its lift $g$ (see Section~\ref{sec:parab}).
In \cite{konig}, K\"onig characterized the three types of $f$ (under the restriction on eventual contractibility of loops in $U$) in terms of the behaviour of the sequence $|f^{n+1}(z) - f^n(z)|/\dist(f^n(z),\bd U)$ for $z \in U$ (see Theorem~\ref{thm:konig}), where  $\bd U$ denotes the boundary of $U$ in $\C$ and $\dist (f^n(z),\bd U) = \inf_{u \in \bd U} |f^n(z)-u|$.

In this paper we present a characterization of maps $f$ of parabolic~I type in terms of their dynamical properties in the general case, where $f$ is an arbitrary holomorphic map without fixed points on a hyperbolic domain $U\subset \C$.  More precisely, we prove the following.
\begin{thmA}
Let $U$ be a hyperbolic domain in $\C$ and let $f: U \to U$ be a holomorphic
map without fixed points and without isolated boundary fixed points. Then the
following statements are equivalent:
\begin{itemize}
\item[$(a)$] $f$ is of parabolic~I type,
\item[$(b)$] $\varrho_U(f^{n+1}(z), f^n(z)) \to
0$ as $n \to \infty$ for some $z \in U$, 
\item[$(c)$] $\varrho_U(f^{n+1}(z), f^n(z)) \to
0$ as $n \to \infty$ almost uniformly on $U$, 
\item[$(d)$] $|f^{n+1}(z)- f^n(z)|/\dist(f^n(z), \bd U) \to
0$ as $n \to \infty$ for some $z \in U$,
\item[$(e)$] $|f^{n+1}(z)- f^n(z)|/\dist(f^n(z), \bd U) \to
0$ as $n \to \infty$ almost uniformly on $U$.
\end{itemize}
\end{thmA}

For the other two types of $f$ we prove that if $\inf_{z\in U} \lim_{n\to\infty}\varrho_U(f^{n+1}(z), f^n(z)) > 0$, then $f$ is of hyperbolic type (see Proposition~\ref{prop:hyper} and Remark~\ref{rem:konig}).

Another question we consider in this paper is the existence and properties of \emph{absorbing domains} in $U$ for $f$.

\begin{defn*}[{\bf Absorbing domain}{}] Let $U$ be a domain in $\C$ and let $f: U \to U$ be a holomorphic map. A domain $W \subset U$ is called \emph{absorbing} in $U$ for $f$, if $f(W) \subset W$ and for every compact set $K \subset U$ there exists $n \ge 0$, such that $f^n(K) \subset W$. 
\end{defn*}

The problem of existence of suitable absorbing domains for holomorphic maps has a long history and is related to the study of local behaviour of a holomorphic map near a fixed point and properties of the Fatou components in the theory of the dynamics of rational, entire and meromorphic maps. (For basic facts about the dynamics of holomorphic maps we refer to \cite{bergweiler,carlesongamelin}.) For instance, if $U$ is a neighbourhood of an attracting fixed point $\zeta$ of $f$ (e.g.~if $U$ is the immediate basin of an attracting periodic point $\zeta$ of period $p$ of a meromorphic map $\tilde f$ and $f = \tilde f^p$), then $f$ is conformally conjugate by a map $\phi$ to $w\mapsto f'(\zeta) w$ (if $0 < |f'(\zeta)| < 1$) or $w\mapsto w^k$ for some integer $k > 1$ (if $f'(\zeta)= 0$) near $w=0$, and $W = \phi^{-1}(\D(0, \varepsilon))$ for a small $\varepsilon > 0$ is a simply connected absorbing domain in $U$ for $f$, such that $f(\overline{W}) \subset W$ and $\bigcap_{n\geq 0} f^n(\overline{W}) = \{\zeta\}$
(see e.g.~\cite{carlesongamelin}). 

From now on, assume that 
\[
f: U \to U
\]
is a holomorphic map on a hyperbolic domain $U \subset \C$ and the iterates of $f$ converge to a boundary point $\zeta$ of $U$. Changing the coordinates by a M\"obius map, we can assume $\zeta = \infty$, so 
\[
f^n \to \infty \quad \text{as} \quad  n \to \infty
\]
almost uniformly on $U$. Since the above definition of an absorbing domain is quite wide, (observe for instance that the whole domain $U$ is always absorbing for $f$), we introduce a notion of a \emph{nice absorbing domain}.

\begin{defn*}[{\bf Nice absorbing domain}{}] An absorbing domain $W$ in a domain $U \subset \C$ for a holomorphic map $f: U \to U$ with $f^n \to \infty$ is called \emph{nice}, if
\begin{itemize}
\item[$(a)$] $\overline{W} \subset U$,
\item[$(b)$] $f^n(\overline{W}) = \overline{f^n(W)} \subset f^{n-1}(W)$ for every $n\geq 1$,
\item[$(c)$] $\bigcap_{n=1}^\infty f^n(\overline{W}) = \emptyset$.
\end{itemize}
\end{defn*}

An example of a nice absorbing domain is an attracting petal $W$ in a basin $U$ of a parabolic $p$-periodic point $\zeta = \infty$ for a rational map $\tilde f$, where $f = \tilde f^p$ (see e.g.~\cite{carlesongamelin}). 

The question of existence of absorbing regions in hyperbolic domains $U$ is particularly interesting in studying the dynamics of entire and meromorphic maps with Baker domains. Recall that a $p$-periodic Baker domain for a transcendental meromorphic map $\tilde f: \C \to \clC$ is a Fatou component $U \subset \C$, such that $\tilde f^p(U) \subset U$ and $\tilde f^{pn} \to \infty$ as $n \to \infty$. Note that periodic Baker domains for entire maps are always simply connected (see \cite{baker75}), while in the transcendental meromorphic case they can be multiply connected. The dynamical properties of Baker domains have been studied in many papers, see
e.g.~\cite{barfag,berdrasin,bergzheng,faghen,konig,rippon,ripponstallardII,ripponstallard} and a survey \cite{rippon-survey}.
   
The Baker--Pommerenke--Cowen results \cite{baker-pommer,cowen,pommer} imply that for a holomorphic map $f: \HH \to \HH$ with $f^n \to \infty$ as $n \to \infty$, there exists a nice simply connected absorbing domain $W$ in $U$ for $f$, such that the map $\varphi$ which semi-conjugates $f$ to a M\"obius map $T: \Omega \to \Omega$ is univalent on $W$. Hence, by the use of a Riemann map, one can construct 
nice simply connected absorbing domains for $f: U \to U$ with $f^n \to \infty$, if $U \subset \C$ is simply connected.  

The existence of such absorbing regions in non-simply connected hyperbolic domains $U$, in particular Baker domains for transcendental meromorphic maps was an open question addressed e.g.~in \cite{bergweiler2,buffruc,mayer}, related to the question of the existence of so-called virtual immediate basins for Newton root-finding algorithm for entire functions. 

In \cite{konig}, K\"onig showed that if $U$ is an arbitrary hyperbolic domain in $\C$ and every closed loop in $U$ is eventually contractible in $U$ under iteration of $f$, then there exists a nice simply connected absorbing domain in $U$ for $f$. In particular, this holds if $U$ is a $p$-periodic Baker domain for a transcendental meromorphic map $\tilde f$ with finitely many poles, where $f = \tilde f^p$ (see Theorem~\ref{thm:konig}). 

In a recent paper \cite{bfjk}, the authors constructed nice absorbing domains for  $f: U \to U$ with $f^n \to \infty$ for an arbitrary hyperbolic domain $U \subset \C$ (see Theorem~\ref{thm:bfjk}). In particular, the construction was used to prove that the Baker domains of Newton's method for entire functions are always simply connected.

In this paper we consider the question of existence of simply connected absorbing domains $W$ in $U$ for $f$. This is equivalent to the condition that every closed loop in $U$ is eventually contractible in $U$ under iteration of $f$ (see Proposition~\ref{prop:simply}). We prove the following.

\begin{thmB} Let $U$ be a hyperbolic domain in $\C$ and let $f: U \to U$ be a holomorphic map, such that $f^n \to \infty$ as $n \to
\infty$ and $\infty$ is not an isolated point of the boundary of $U$ in $\clC$. If $f$ is of parabolic~I type, 
then there exists a nice simply connected absorbing domain $W$ in $U$ for $f$.
\end{thmB}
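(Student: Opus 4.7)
My plan is to reduce Theorem~B to K\"onig's theorem (Theorem~\ref{thm:konig}), which produces a nice simply connected absorbing domain under the hypothesis that every closed loop in $U$ is eventually contractible in $U$ under iteration of $f$. Hence it will suffice to establish, under the hypotheses of Theorem~B: for every closed loop $\gamma$ in $U$, there exists $n_0$ such that $f^n \circ \gamma$ is contractible in $U$ for all $n \ge n_0$.

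To prove this, I would pass to the universal covering $\pi: \HH \to U$ with deck group $\Gamma$, and fix a lift $g: \HH \to \HH$ of $f$. Since parabolic~I type is preserved by lifting, $g$ is also of parabolic~I type, so the Baker--Pommerenke--Cowen theorem yields a holomorphic semi-conjugacy $\varphi: \HH \to \C$ with $\varphi \circ g = \varphi + 1$ and a nice simply connected absorbing domain $V \subset \HH$ for $g$ on which $\varphi$ is univalent. A loop $\gamma$ in $U$ based at $z_0$ lifts to an arc in $\HH$ from some $\tilde z_0 \in \pi^{-1}(z_0)$ to $S(\tilde z_0)$ for a unique $S \in \Gamma$; defining $S^{(n)} \in \Gamma$ inductively by $g^n \circ S = S^{(n)} \circ g^n$, the loop $f^n \circ \gamma$ is contractible in $U$ if and only if $S^{(n)} = \mathrm{id}$.

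The main step is to show that, for $N$ sufficiently large, the iterate $V' := g^N(V)$ satisfies $T(V') \cap V' = \emptyset$ for every non-trivial $T \in \Gamma$. Granting this, for any $S \in \Gamma$ both $g^n(\tilde z_0)$ and $g^n(S(\tilde z_0)) = S^{(n)}(g^n(\tilde z_0))$ will lie in $V'$ for all $n$ sufficiently large (by the absorbing property of $V'$, which is inherited from $V$), forcing $S^{(n)}(V') \cap V' \ne \emptyset$ and hence $S^{(n)} = \mathrm{id}$. To produce such $V'$, let $p \in \bd \HH$ be the Denjoy--Wolff point of $g$. The relation $\varphi(g^n(V)) = \varphi(V) + n$ together with univalence of $\varphi|_V$ shows that $g^n(V)$ concentrates at $p$ as $n \to \infty$, while the hypothesis that $\infty$ is not an isolated boundary point of $U$ ensures that the $\Gamma$-stabilizer of $p$ is not a non-trivial parabolic cyclic subgroup (the latter would correspond to an isolated cusp at $\infty$). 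Standard Fuchsian-group geometry then provides a horocyclic neighborhood $B$ of $p$ with $T(B) \cap B = \emptyset$ for every non-trivial $T \in \Gamma$, into which $g^N(V)$ ultimately fits.

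The main obstacle will be making precise the final containment $g^N(V) \subset B$: translating the concentration of $g^n(V)$ at $p$ in the $\varphi$-coordinate into actual containment in a prescribed horoball in $\HH$. This requires careful control on the boundary asymptotics of $\varphi^{-1}|_{\varphi(V)}$ near $p$, and it is also here that the parabolic~I hypothesis is decisive: in the parabolic~II and hyperbolic cases the absorbing set $g^n(V)$ retains too much hyperbolic width near $p$ to fit inside any single horoball, consistent with the counterexamples for those types promised in the introduction.
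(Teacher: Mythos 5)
Your proposal takes a genuinely different route from the paper's proof, and it has serious gaps.

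The paper argues by contradiction directly in $U$, using Euclidean geometry: given a loop $\gamma \subset U$ whose images $f^n(\gamma)$ are never contractible, the parabolic~I hypothesis is converted (via Theorem~A(c) and Theorem~\ref{thm:rippon}) into the quantitative estimate $|f^{n+1}(z)-f^n(z)| < \tfrac12 \dist(f^n(z),\bd U)$ for $z\in\gamma$ and $n$ large. This bounds the change in argument of $f^n(z)-v$ along the orbit for any fixed $v\in\C\setminus U$, so the winding number of $f^n(\gamma)$ about $v$ stabilizes. Pushing this to all $v$ shows that the part of $\C\setminus U$ enclosed by $f^n(\gamma)$ is constant, and since $f^n(\gamma)\to\infty$, this set is all of $\C\setminus U$, which must therefore be compact --- i.e.~$\infty$ is isolated in $\bd U$, contradiction. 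The argument is entirely elementary and local, and the parabolic~I hypothesis enters exactly once, through the ratio $|f^{n+1}-f^n|/\dist(\cdot,\bd U)\to 0$.

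Your proposal instead lifts to $\HH$ and aims to show that $g^N(V)$ is disjoint from all its non-trivial $\Gamma$-translates; this is logically sufficient (it amounts to showing that $\pi(g^N(V))$ is a simply connected absorbing domain), but as stated it is essentially equivalent to the full theorem, and the key step is not carried out. The three gaps are concrete:

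\begin{itemize}
\item[(1)] You only rule out a \emph{parabolic} cyclic stabilizer of $p$, by linking it to an isolated puncture at $\infty$, but you never rule out $p$ being the fixed point of a \emph{hyperbolic} element of $\Gamma$. If $\Gamma_p$ contains a hyperbolic $T$, then $T$ moves every horoball at $p$ to a strictly nested one, so no horoball is disjoint from $T(B)$; your construction cannot even get off the ground in that case.
\item[(2)] The assertion that, once parabolic stabilizers are excluded, ``standard Fuchsian-group geometry then provides a horocyclic neighborhood $B$ of $p$ with $T(B)\cap B=\emptyset$ for every non-trivial $T\in\Gamma$'' is not a standard fact, and it is false in general. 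If $p$ is a limit point of $\Gamma$ (which can happen here: $\C\setminus U$ may accumulate at $\infty$ in complicated ways even in the parabolic~I case, cf.~case~(i) of Theorem~\ref{thm:ex}), there are infinitely many $T\in\Gamma$ with $T(p)$ arbitrarily close to $p$, and no single horoball at $p$ is disjoint from all of their images. A precisely invariant horoball requires $p$ to be either outside the limit set or a bounded parabolic point; neither is given.
\item[(3)] Even granting a precisely invariant horoball $B$, the containment $g^N(V)\subset B$ is exactly the part you flag as ``the main obstacle'' and leave open. The parabolic~I hypothesis controls the hyperbolic step length $\varrho_\HH(g^{n+1}(w),g^n(w))\to 0$, not the hyperbolic diameter of the absorbing region $V$ near $p$. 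Cowen's $V$ can be arbitrarily ``wide'' near $p$, and translating the concentration of $\varphi(V)+n$ in the plane $\Omega=\C$ back through $\varphi^{-1}$ into horoball containment in $\HH$ requires boundary regularity of $\varphi$ that is not available.
\end{itemize}

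In short, the approach inverts the difficulty: it tries to establish precise invariance of a lifted absorbing region, which is really the conclusion in disguise, and relies on horoball facts that need hypotheses you haven't verified. The paper avoids all of this by working with the Euclidean winding number of $f^n(\gamma)$, where the parabolic~I condition has a direct, quantitative payoff. If you want to salvage a covering-space argument, you would need to first establish the winding-number stabilization of the paper and then deduce the precisely-invariant-region statement, rather than the reverse.
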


Note that the assumption on the point at infinity is necessary. In fact, if $\infty$ is an isolated point of the boundary of $U$ in $\clC$, then a simply connected absorbing domain cannot exist for any type of the map $f$ (see Proposition~\ref{prop:isol}).

On the other hand, we provide counterexamples for other types of the map $f$. 

\begin{thmC} There exist transcendental meromorphic maps $f: \C \to \clC$ with an invariant Baker domain $U \subset \C$, such that $f|_U$ is not of parabolic~I type and there is no simply connected absorbing domain $W$ in $U$ for $f$. The examples are constructed in two cases: 
\begin{itemize}
\item $\varrho_U(f^{n+1}(z), f^n(z)) \not\to 0$ as $n \to \infty$ for $z \in U$ and $\inf_{z\in U}\lim_{n\to\infty}\varrho_U(f^{n+1}(z), f^n(z))=0$,
\item $\inf_{z\in U}\lim_{n\to\infty}\varrho_U(f^{n+1}(z), f^n(z))>0$ $(f|_U$ is of hyperbolic type$)$.
\end{itemize}
\end{thmC}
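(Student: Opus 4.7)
The plan is to construct both examples by first selecting an explicit model self-map $T$ of the right half-plane $\HH$ of the desired Baker--Pommerenke--Cowen type --- $T(\omega)=\omega+i$ (parabolic~II) for case~(i), and $T(\omega)=a\omega$ with $a>1$ (hyperbolic) for case~(ii) --- and then engineering a transcendental meromorphic $f\colon \C\to\clC$ which is semi-conjugate to $T$ on a chosen infinitely connected invariant Baker domain $U$. The central idea is to force certain loops in $U$ to remain non-contractible under all iterates by threading them around a sequence of poles escaping to $\infty$; by Proposition~\ref{prop:simply}, this alone precludes any simply connected absorbing domain.

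I would proceed in three stages. First, I would design the target domain: an infinitely connected ``staircase'' region $U\subset\C$ going out to infinity and containing a sequence of poles $p_1,p_2,\ldots$ with $p_k\to\infty$, together with a universal covering $\pi\colon\HH\to U$ whose deck group is chosen so that the natural lift of the intended Baker dynamics coincides with the selected model $T$. On $U$ itself I would then define $f$ by the requirement $f\circ\pi=\pi\circ T$, and arrange the pole pattern so that a small loop $\gamma_k$ around $p_k$ is homotopic in $U$ to a loop nontrivially encircling $p_{k+1}$; this guarantees that no forward iterate $f^n(\gamma_k)$ is null-homotopic in $U$.

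The second stage, which is the main technical step, is to extend this formally defined $f|_U$ to a transcendental meromorphic function on the whole plane while keeping all the intended dynamical features. This is done by an approximation construction in the spirit of Bergweiler, Rippon--Stallard, and Bishop: combining Runge's or Arakelyan's theorem with a Mittag-Leffler / Cauchy integral representation, one produces a meromorphic $f\colon\C\to\clC$ matching the prescribed behaviour on a thick exhaustion of $U$ closely enough that the Baker domain persists. A standard normal-family argument on the target side, using $T^n\omega\to\infty$ in $\HH$, then verifies that $U$ is genuinely an invariant Baker domain and that the $p_k$ remain actual poles trapped inside the $\gamma_k$.

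The third stage is to read off the dynamical classification via Theorem~A and Proposition~\ref{prop:hyper}. Writing $z=\pi(\omega)$, the semi-conjugacy together with Schwarz--Pick applied to $\pi$ gives $\varrho_U(f^{n+1}(z),f^n(z))\le\varrho_\HH(T^{n+1}\omega,T^n\omega)$, with a matching lower bound after restricting to a fundamental domain of the deck group. In case~(ii), $\varrho_\HH(a\omega,\omega)$ is a positive constant independent of $\omega$, yielding $\inf_{z\in U}\lim_{n\to\infty}\varrho_U(f^{n+1}(z),f^n(z))>0$. In case~(i), $\varrho_\HH(\omega+i,\omega)$ does not tend to $0$ along any single orbit but tends to $0$ as $\Re\omega\to\infty$, producing exactly the asserted mixed behaviour. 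The principal obstacle I anticipate is the global meromorphic approximation: it must be tight enough that the topology of $U$ as a Fatou component is preserved, that every $p_k$ survives as a true pole surrounded by a loop in $U$, and that no spurious Fatou components intrude to ``simplify'' the loops $\gamma_k$ under iteration. This delicate transcendental surgery is the technical heart of the theorem.
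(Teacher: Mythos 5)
Your proposal inverts the logic of the paper's construction, and in doing so runs into several gaps that are not addressable by citing ``standard'' results.

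The paper's actual construction is direct and explicit: it starts from the formula
$f(z) = z + 1 + \sum_{p\in\PP} a_p/(z-p)^2$ with $\PP$ equal to $\Z$ (or $\Z_+$) for the first case and to $\Z+i\Z$ for the second, and with the coefficients $a_p$ taken small enough that $f^n(z)$ stays within $\varepsilon/2$ of $z+n$ on the set $\tilde V = \C\setminus\bigcup_{p\in\tilde\PP}\D(p,2\varepsilon)$ (Lemmas~\ref{lem:1} and~\ref{lem:2}). This at once produces the Baker domain $U\supset\tilde V$, allows direct estimates of $\varrho_U(f^{n+1}(z),f^n(z))$ using Lemma~\ref{lem:hyp}, Theorem~\ref{thm:rippon} and the explicit geometry of $U$, and shows by a winding-number argument that $f^n(\partial Q_0)$ encircles a pole for every $n$, hence never becomes contractible. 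There is no approximation step at all.

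Your plan, by contrast, starts from a model $T\colon\HH\to\HH$ and tries to realize it inside a global meromorphic map, and the three crucial steps are not actually established. First, defining $f|_U$ by $f\circ\pi=\pi\circ T$ requires $T$ to normalize the deck group $\Gamma$ of $\pi$; for an infinitely connected $U$ going out to $\infty$ this is a genuine constraint on $\Gamma$ and $T$ jointly, not something obtained by fiat. Second, the ``Runge/Arakelyan/Bishop'' globalization is precisely where the theorem lives, and you acknowledge it is the ``technical heart'' without carrying it out: there is no off-the-shelf theorem asserting that a map defined only on an infinitely connected unbounded domain, with prescribed pole pattern, extends to a transcendental meromorphic function of the plane for which that domain (with its full homotopy) persists as a Baker domain without new Fatou components intruding between the $\gamma_k$. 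Third, and independently of the extension, your dynamical classification argument is incomplete: Schwarz--Pick applied to $\pi$ gives only the upper bound $\varrho_U(f^{n+1}(z),f^n(z))\le\varrho_\HH(T^{n+1}\omega,T^n\omega)$, whereas the quantity you need to bound below in the hyperbolic case is
\[
\varrho_U(f(z),z)=\min_{\sigma\in\Gamma}\varrho_\HH\bigl(\sigma(T\omega),\omega\bigr),
\]
which can degenerate if some deck transformation brings $T\omega$ close to $\omega$. ``Restricting to a fundamental domain'' does not give a matching lower bound. (The paper handles this point by an explicit estimate of $\varrho_{\C\setminus\{p,p+1\}}(f(z),z)$ using two nearby poles of $f$, a step that has no analogue in your sketch.) In short, your proposal is a reasonable research programme but is not a proof: the extension to a global meromorphic map and the lower hyperbolic-distance bound are both missing, and each is a substantial obstacle.
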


We also provide examples of simply connected absorbing domains $W$ in $U$ for $f$ of parabolic~I type. In all three types of examples, the map $f$ has the form
\[
f(z) = z + 1 + \sum_{p \in \PP} \frac{a_p}{(z-p)^2}, \qquad a_p \in \C \setminus \{0\},
\]
where $\PP$ is the set of poles of $f$. Moreover, $\{\infty\}$ is a singleton component of $\clC\setminus U$, in particular it is a singleton component of $J(f)$. To our knowledge, these are the first examples of Baker domains of that kind. A detailed description of the examples is contained in Theorem~\ref{thm:ex}.

The plan of the paper is the following. In Section~\ref{sec:back} we present definitions and results used in the proofs of Theorems~A,~B,~C. In Section~\ref{sec:parab} we characterize parabolic~I type (Theorem~A) and in Section~\ref{sec:absorb} we prove Theorem~B. The examples described in Theorem~C are constructed in Section~\ref{sec:ex}.

\section*{Acknowledgements}
We wish to thank the Institut de Matem\`atica de la Universitat de Barcelona (IMUB)  for their hospitality.

\section{Background}\label{sec:back}

For $z \in \C$ and $A, B \subset \C$ we write 
\[
\dist(z, A) = \inf_{a \in A}|z - a|, \quad  \dist(A, B) = \inf_{a \in A,\: b \in B}|a - b|.
\]
The symbols $\overline{A}$, $\bd A$ denote, respectively, the closure and boundary of $A$ in $\C$. The Euclidean disc of radius $r$ centred at $z \in\C$ is denoted by $\D(z,r)$ and the unit disc $\D(0,1)$ is simply written as $\D$.

Let $U \subset \C$ be a hyperbolic domain, i.e.~a domain whose complement in $\C$ contains at least two points. By the Uniformization Theorem, there exists a
universal holomorphic covering $\pi$ from the right half-plane $\HH$ onto $U$. Every holomorphic map $f: U \to U$ can be lifted by $\pi$ to a holomorphic map $g: \HH \to \HH$, such that the diagram
\[
\begin{CD}
\HH @>g>> \HH\\
@VV\pi V @VV\pi V\\
U @>f>> U
\end{CD}
\]
commutes. By $\varrho_U(\cdot)$ and $\varrho_U(\cdot, \cdot)$ we denote, respectively, the density of the hyperbolic metric and the hyperbolic distance in $U$, defined by the use of the hyperbolic metric in $\HH$. The disc of radius $r$ centred at $z$ with respect to the hyperbolic metric in $U$ is denoted by $\DD_U(z,r)$.

Recall the classical Schwarz--Pick Lemma and Denjoy--Wolff Theorem.

\begin{Schwarzlem}[{\cite[Theorem 4.1]{carlesongamelin}}]
Let $U, V$ be hyperbolic domains in $\C$ and
let $f: U \to V$ be a holomorphic map. Then
\[
\varrho_V(f(z), f(z')) \leq \varrho_U(z, z')
\]
for every $z, z' \in U$. In particular, if $U \subset V$, then
\[
\varrho_V(z, z') \leq \varrho_U(z, z'),
\]
with strict inequality unless $z = z'$ or $f$ lifts to a M\"obius automorphism of 
$\HH$.
\end{Schwarzlem}

\begin{DWthm}[{\cite[Theorem 3.1]{carlesongamelin}}] 
Let $g: \HH \to \HH$ be a non-constant holomorphic map, which is not a M\"obius automorphism of $\HH$. Then there exists a point $\zeta \in \overline{\HH} \cup \{\infty\}$, such that $g^n$ tends to $\zeta$ as $n \to \infty$ almost uniformly on $\HH$.
\end{DWthm}

The following estimate relates the hyperbolic density $\varrho_U$ to the quasi-hyperbolic density $1/\dist(z, \bd U)$.

\begin{lem}[{\cite[Theorem 4.3]{carlesongamelin}}] \label{lem:CG} Let $U \subset \C$ be a hyperbolic domain. Then
\[
\varrho_U(z) \leq \frac{2}{\dist(z, \bd U)} \qquad \text{for } z \in U
\]
and
\[
\varrho_U(z) \geq \frac{1 + o(1)}{\dist(z, \bd U) \log (1/\dist(z, \bd
U))}\qquad \text{as } z \to \bd U.
\]
Moreover, if $U$ is simply connected, then
\[
\varrho_U(z) \geq \frac{1}{2\dist(z, \bd U)}\qquad \text{for } z \in U.
\]
\end{lem}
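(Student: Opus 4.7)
The plan is to prove the three inequalities separately, treating the upper bound and the simply connected lower bound as short consequences of the Schwarz--Pick Lemma (already stated in the paper) together with Koebe's $1/4$ Theorem, and then reducing the general lower bound to the classical asymptotic of the hyperbolic density of the twice-punctured plane $\C \setminus \{0, 1\}$ at the puncture $0$.

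For the upper bound, fix $z_0 \in U$ and put $d = \dist(z_0, \bd U)$. Then $\D(z_0, d) \subset U$, so applying the Schwarz--Pick Lemma (in the form of monotonicity of hyperbolic density under inclusion) gives $\varrho_U(z_0) \leq \varrho_{\D(z_0, d)}(z_0)$. An elementary computation, rescaling the explicit density $\varrho_\D(z) = 2/(1-|z|^2)$, yields $\varrho_{\D(z_0, d)}(z_0) = 2/d$. For the simply connected lower bound, take a Riemann map $\phi: \D \to U$ with $\phi(0) = z_0$; conformal invariance of the hyperbolic density gives $\varrho_U(z_0) = \varrho_\D(0)/|\phi'(0)| = 2/|\phi'(0)|$, while Koebe's $1/4$ Theorem yields $|\phi'(0)| \leq 4\dist(z_0, \bd U)$, so $\varrho_U(z_0) \geq 1/(2\dist(z_0, \bd U))$.

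For the asymptotic lower bound in the general case, pick $a \in \bd U$ realizing $|z_0 - a| = d$, and choose a second boundary point $b \in \bd U \setminus \{a\}$, which exists because $U$ is hyperbolic. Then $U \subset \C \setminus \{a, b\}$, so Schwarz--Pick gives $\varrho_U(z_0) \geq \varrho_{\C\setminus\{a,b\}}(z_0)$. The affine map $w \mapsto (w - a)/(b - a)$ is a conformal equivalence $\C \setminus \{a, b\} \to \C \setminus \{0, 1\}$, so
\[
\varrho_{\C\setminus\{a,b\}}(z_0) = \frac{1}{|b-a|}\,\varrho_{\C\setminus\{0,1\}}\!\left(\frac{z_0 - a}{b-a}\right).
\]
I would then invoke the classical asymptotic $\varrho_{\C\setminus\{0,1\}}(w) = (1 + o(1))/(|w|\log(1/|w|))$ as $w \to 0$, which can be derived from the behaviour of the elliptic modular function near its parabolic cusp (or by comparing $\C \setminus \{0, 1\}$ with the punctured disc $\D^* = \D \setminus \{0\}$ whose density, computed via the covering $\HH \ni w \mapsto e^{iw} \in \D^*$, equals $1/(|z|\log(1/|z|))$). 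Substituting $w = (z_0-a)/(b-a)$ so that $|w| = d/|b-a|$, the denominator becomes $d\bigl(\log(1/d) + \log|b-a|\bigr) = (1 + o(1))\, d\log(1/d)$ as $d \to 0$, provided $|b-a|$ stays bounded above and bounded away from $0$.

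The main obstacle is exactly this uniformity point together with citing (or re-proving) the sharp asymptotic with constant $1$ for $\varrho_{\C\setminus\{0,1\}}$ near $0$. To handle the uniformity, I would fix two boundary points $b_1, b_2$ with $|b_1 - b_2| \geq \tfrac{1}{2}\diam(\bd U)$; for any nearest-point $a = a(z_0)$ on $\bd U$, the triangle inequality forces at least one $b_i$ to satisfy $|b_i - a| \geq \tfrac14\diam(\bd U) > 0$, and on bounded subsets of $U$ also $|b_i - a|$ stays bounded above, so the correction $\log|b-a|/\log(1/d)$ is absorbed into the $o(1)$ term uniformly. The sharp $1$ in the numerator is the delicate ingredient and would simply be quoted from the standard references on the modular function; in applications later in the paper only the qualitative form $\varrho_U(z)\dist(z,\bd U) \to 0$ as $z \to \bd U$ is used, which is all that the weaker constant would provide anyway.
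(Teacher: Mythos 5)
The paper does not prove this lemma — it cites it from Carleson--Gamelin — so your argument can only be assessed on its own merits, not compared with an in-paper proof.

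Your derivation of the upper bound from $\D(z_0,d)\subset U$ and of the simply connected lower bound via a Riemann map together with Koebe's $1/4$ Theorem are both correct, complete, and standard; with the normalization $\varrho_\D(z)=2/(1-|z|^2)$ implied by the paper's choice $\varrho_\HH(z)=1/\Re(z)$, all constants come out as stated.

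For the asymptotic lower bound, the reduction to $\C\setminus\{0,1\}$ by Schwarz--Pick and an affine change of coordinates is indeed the standard route, and you correctly identify both the sharp asymptotic for $\varrho_{\C\setminus\{0,1\}}$ near a puncture and the uniformity of the auxiliary boundary point $b$ as the crux. The fix you propose, however, does not close the gap: fixing $b_1, b_2\in\bd U$ with $|b_1-b_2|\geq \frac12\diam(\bd U)$ tacitly assumes $\bd U$ bounded. When $\bd U$ is unbounded a sequence $z_n$ with $\dist(z_n,\bd U)\to 0$ may escape to infinity, the nearest boundary point $a_n$ escapes with it, and $\log|b_i-a_n|/\log(1/d_n)$ need not be $o(1)$. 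In fact the estimate with a uniform $o(1)$ is then simply false: for $U=\C\setminus\{2^n: n\geq 0\}$ and $z_n=2^n+1/n$, comparing $U$ from above with the punctured disc $\D(2^n,2^{n-1})\setminus\{2^n\}$ gives
\[
\varrho_U(z_n)\,\dist(z_n,\bd U)\,\log\bigl(1/\dist(z_n,\bd U)\bigr)\ \le\ \frac{\log n}{(n-1)\log 2+\log n}\ \longrightarrow\ 0.
\]
The Carleson--Gamelin statement should therefore be read with $z$ tending to a \emph{fixed} boundary point $\zeta_0$, and then one chooses $b$ near $\zeta_0$ (adapted to the limit, not globally); under that reading your argument goes through. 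Finally, your remark that the paper only needs a qualitative consequence is right in spirit — the lemma is invoked once, to get a positive lower bound for $\varrho_{\C\setminus\{0,1\}}$ on $\D(0,2)\setminus\{0,1\}$, for which the sharp constant $1$ is irrelevant — but the consequence is that $\varrho_U$ blows up near finite boundary points, not the relation $\varrho_U(z)\dist(z,\bd U)\to 0$ you wrote, which is already false for $U=\D$.
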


The above lemma implies the following standard estimation of the hyperbolic distance. We include the proof for completeness.

\begin{lem}\label{lem:hyp} Let $U$ be a hyperbolic domain in $\C$ and let $z, z' \in U$. Then
\[
\frac{|z-z'|}{\dist(z, \bd U)}\ge 1 - e^{-\varrho_U(z, z')/2}.
\]
\end{lem}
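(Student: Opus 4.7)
The plan is to reduce the inequality to a one-dimensional estimate along the straight segment from $z$ to $z'$, using the upper bound $\varrho_U(w)\le 2/\dist(w,\bd U)$ from Lemma~\ref{lem:CG}.

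First I would dispose of the trivial case. Set $r=\dist(z,\bd U)$. If $|z-z'|\ge r$, then $|z-z'|/\dist(z,\bd U)\ge 1\ge 1-e^{-\varrho_U(z,z')/2}$, and there is nothing to prove. So assume $|z-z'|<r$; then the segment $\gamma(t)=z+t(z'-z)$, $t\in[0,1]$, lies in the Euclidean disc $\D(z,r)\subset U$, so it is an admissible path in $U$ from $z$ to $z'$.

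Next I would estimate $\dist(\gamma(t),\bd U)$ along this segment. By the triangle inequality, any boundary point $u\in\bd U$ satisfies $|u-\gamma(t)|\ge|u-z|-t|z'-z|\ge r-t|z'-z|$, so $\dist(\gamma(t),\bd U)\ge r-t|z'-z|>0$. Combining this with Lemma~\ref{lem:CG} gives
\[
\varrho_U(z,z')\le\int_0^1\varrho_U(\gamma(t))\,|z'-z|\,dt\le\int_0^1\frac{2|z'-z|}{r-t|z'-z|}\,dt=-2\log\!\left(1-\frac{|z'-z|}{r}\right).
\]
Rearranging this inequality (exponentiate, multiply by $-1$, add $1$) yields precisely
\[
\frac{|z-z'|}{\dist(z,\bd U)}\ge 1-e^{-\varrho_U(z,z')/2},
\]
as required.

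There is no real obstacle here: the argument is a short path-integral computation, and the only nontrivial ingredient is the upper half of Lemma~\ref{lem:CG}. The one point worth being careful about is the dichotomy between the cases $|z-z'|<r$ and $|z-z'|\ge r$, since the logarithm in the estimate above is only defined (and the segment only certainly lies in $U$) when $|z-z'|<r$; the complementary case is handled directly by the trivial bound.
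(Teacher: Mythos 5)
Your proof is correct and follows essentially the same route as the paper's: integrate the upper bound $\varrho_U(w)\le 2/\dist(w,\bd U)$ from Lemma~\ref{lem:CG} along the straight segment from $z$ to $z'$, using $\dist(\gamma(t),\bd U)\ge \dist(z,\bd U)-t|z'-z|$, and rearrange. The only cosmetic difference is that the paper argues by contradiction (assuming the inequality fails, which automatically places $|z-z'|<\dist(z,\bd U)$), whereas you split into the trivial case $|z-z'|\ge\dist(z,\bd U)$ and the direct computation; both handle the same dichotomy correctly.
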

\begin{proof} Suppose that there exist $z,z'\in U$ such that  
\begin{equation}\label{eq:<}
\frac{|z-z'|}{\dist(z, \bd U)} < 1 - e^{-\varrho_U(z, z')/2}
\end{equation}
and let $\gamma$ be the straight line segment connecting $z$ and
$z'$. In particular, \eqref{eq:<} implies that $|z-z'| < \dist(z, \bd U)$, so $\gamma \subset U$ and $|u-z| < \dist(z, \bd U)$ for $u \in \gamma$. Thus, by the second inequality in Lemma~\ref{lem:CG},
\begin{multline*}
\varrho_U(z, z') \leq \int_\gamma \varrho_U(u) |du| \leq 2
\int_\gamma \frac{|du|}{\dist(u,\bd U)}
\leq 2\int_\gamma \frac{|du|}{\dist(z,\bd U) - |u - z|}\\
= 2\int_0^{|z-z'|} \frac{ds}{\dist(z,\bd U) - s} = 
2 \ln \frac{\dist(z,\bd U)}{\dist(z,\bd U) - |z-z'|}
= -2\ln \left(1 - \frac{|z-z'|}{\dist(z,\bd U)}\right),
\end{multline*}
which contradicts \eqref{eq:<}.
\end{proof}

The lower bounds for hyperbolic metric from Lemma~\ref{lem:CG} can be improved in the presence of dynamics. The following result was proved by Rippon in \cite{rippon} (actually, it was formulated under an additional assumption $f^n \to \infty$ as $n\to\infty$, but the proof does not use this).

\begin{thm}[{\cite[Theorem 1]{rippon}}]\label{thm:rippon}
Let $U$ be a hyperbolic domain in $\C$ and let $f: U \to U$ be a holomorphic map without fixed points and without isolated boundary fixed points. Then for every compact set $K \subset U$ there exists a constant $C > 0$ such that 
\[
\frac{|f^n(z) - f^n(z')|}{\dist(f^n(z), \bd U)} \leq C \varrho_U(f^n(z), f^n(z'))
\]
for every $z, z' \in K$ and every $n \geq 0$.
\end{thm}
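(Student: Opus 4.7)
The plan is to use the universal covering $\pi: \HH \to U$, together with the lift of $f$, to reduce the inequality to proving the uniform lower bound
\[
\dist(f^n(z), \bd U) \cdot \varrho_U(f^n(z)) \ge c > 0 \qquad \text{for every } n \ge 0,\ z \in K,
\]
with some constant $c>0$ depending only on $K$; call this statement $(\ast)$. Throughout, by the Schwarz--Pick Lemma, $\varrho_U(f^n(z), f^n(z')) \le \varrho_U(z, z') \le R$ for $z, z' \in K$, where $R$ is the hyperbolic diameter of $K$, so all hyperbolic distances encountered are bounded by $R$.

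For the reduction, choose lifts $w_n, w_n' \in \HH$ of $f^n(z), f^n(z')$ realising the hyperbolic distance, so that $\varrho_\HH(w_n, w_n') = \varrho_U(f^n(z), f^n(z')) \le R$. On a Euclidean disc about $w_n$ of radius comparable to $\Re(w_n)$, the covering $\pi$ restricts to a univalent branch, and Koebe's distortion theorem applied to this branch gives $|f^n(z) - f^n(z')| \le c_1 |\pi'(w_n)| \cdot |w_n - w_n'|$. Substituting $|\pi'(w_n)| = 1/(\Re(w_n)\,\varrho_U(f^n(z)))$ (which follows from $\varrho_U(\pi(w))\,|\pi'(w)| = 1/\Re(w)$) together with the Euclidean--hyperbolic comparison $|w_n - w_n'| \le c_2 \Re(w_n)\, \varrho_\HH(w_n, w_n')$, valid for hyperbolic distance bounded by $R$, yields
\[
|f^n(z) - f^n(z')| \le c_3\, \frac{\varrho_U(f^n(z), f^n(z'))}{\varrho_U(f^n(z))}.
\]
Dividing both sides by $\dist(f^n(z), \bd U)$ reduces the theorem to $(\ast)$.

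The main obstacle is establishing $(\ast)$. By Lemma~\ref{lem:CG} the product is always at most $2$, and it measures how close $U$ looks to being simply connected near the orbit point; \emph{a priori} it can tend to $0$ as $f^n(z)$ approaches $\bd U$ through ``narrow'' parts of $U$. My approach would be by contradiction: if $\dist(f^{n_k}(z_k), \bd U)\,\varrho_U(f^{n_k}(z_k)) \to 0$ for some $n_k \to \infty$ and $z_k \in K$, then, after rescaling a neighbourhood of $f^{n_k}(z_k)$ to unit size and passing to a Carath\'eodory limit of the rescaled domains, one extracts a non-trivial limiting ``bubble'' of boundary; tracking deck transformations on the universal cover and applying Montel's theorem to $(f^n)$, one argues that this bubble must correspond to a boundary point $\zeta$ of $U$ at which $f$ extends holomorphically and fixes $\zeta$, with the geometry of the rescaled limit forcing $\zeta$ to be an isolated point of $\bd U$ --- contradicting the hypothesis. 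This normal-families analysis coupled with the careful rescaling near $\bd U$ is the technical heart of the argument.
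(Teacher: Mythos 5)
The paper states Theorem~\ref{thm:rippon} as a quoted result from Rippon's paper and does not prove it, so there is no internal proof to compare against; I can only assess the proposal on its own merits.

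Your reduction of the theorem to the lower bound $(\ast)$: $\dist(f^n(z), \bd U)\,\varrho_U(f^n(z)) \ge c > 0$ for $z\in K$, $n\ge0$, is correct in outcome, but the Koebe justification is not: the universal covering $\pi$ need \emph{not} be univalent on a Euclidean disc about $w_n$ of radius comparable to $\Re(w_n)$ --- if $U$ is, say, a thin annulus, the deck group contains translations of arbitrarily small hyperbolic length, so the injectivity radius at $\pi(w_n)$ can be tiny relative to $\Re(w_n)$. The inequality $|\pi(w_n)-\pi(w_n')|\le c_1|\pi'(w_n)|\,|w_n-w_n'|$ for $\varrho_\HH(w_n,w_n')\le R$ does hold, but the clean argument is to integrate $|\pi'|$ along the hyperbolic geodesic from $w_n$ to $w_n'$, use $|\pi'(\zeta)|=\varrho_\HH(\zeta)/\varrho_U(\pi(\zeta))$, and control the variation of both densities over hyperbolic balls of radius $R$ by the Harnack-type bound $\varrho_U(a)/\varrho_U(b)\le e^{2\varrho_U(a,b)}$; no univalence is needed. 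This is a repairable flaw.

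The genuine gap is the proof of $(\ast)$, which is only a sketch, and its central step is incorrect as stated. You claim that a sequence with $\alpha(w_k):=\dist(w_k,\bd U)\,\varrho_U(w_k)\to 0$, after rescaling and passing to a Carath\'eodory limit, ``forces $\zeta$ to be an isolated point of $\bd U$.'' By the Beardon--Pommerenke estimate, $\alpha(w)\to 0$ is equivalent to the nearest boundary point to $w$ being separated from the rest of $\bd U$ by an annulus in $U$ whose modulus tends to infinity; but such boundary points need not be isolated, nor need they converge to an isolated boundary point. For a concrete counterexample to the purely geometric claim, take $U=\D\setminus\bigcup_{n\ge2}\overline{\D(1/n,\,n^{-3})}$: as $w\to 0$ through points lying between consecutive small discs, the nearest boundary component is a circle of radius $n^{-3}$ surrounded in $U$ by an annulus of modulus $\sim\log n\to\infty$, so $\alpha(w)\to 0$, yet $\bd U$ has no isolated points whatsoever. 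Therefore the rescaled limit by itself cannot deliver the conclusion; the dynamics of $f$ (absence of boundary fixed points) must enter in an essential way to rule out the orbit remaining in such ``deep annular'' regions. Your sketch gestures at ``tracking deck transformations'' and ``Montel's theorem applied to $(f^n)$'' without specifying what either produces or how it connects the local rescaled picture to a genuine fixed boundary point of $U$. As written, the heart of the proof of $(\ast)$ --- and hence of the theorem --- is missing.
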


The following result proved by Bonfert in \cite{bonfert} describes
a relation between the dynamical behaviour of $f$ and its lift $g$.

\begin{thm}[{\cite[Theorem 1.1]{bonfert}}]
\label{thm:bonfert1.1}
Let $U$ be a hyperbolic domain in $\C$ and let $f: U \to U$ be a holomorphic
map without fixed points. Let $g: \HH \to \HH$ be a lift of $f$ by a universal
covering map $\pi: \HH \to U$. Then
\[
\varrho_U(f^{n+1}(z), f^n(z)) \to 0 \iff \varrho_\HH(g^{n+1}(1), g^n(1))\to 0
\]
as $n\to\infty$ for any $z \in U$.
\end{thm}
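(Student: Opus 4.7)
The plan is to lift $z \in U$ to a point $w \in \HH$ with $\pi(w) = z$ and to analyze the orbit $w_n := g^n(w)$, which projects to $f^n(z)$. Since $\pi$ is a local hyperbolic isometry and realizes $\varrho_U$ as the quotient of $\varrho_\HH$ by the deck transformation group $\Gamma$, I have
\[
\varrho_U\bigl(f^{n+1}(z),f^n(z)\bigr) \;=\; \inf_{\sigma \in \Gamma}\varrho_\HH\bigl(w_{n+1},\sigma w_n\bigr) \;\le\; \varrho_\HH(w_{n+1},w_n),
\]
and the Schwarz--Pick Lemma applied to $g$ makes $\varrho_\HH(w_{n+1},w_n)$ non-increasing in $n$.

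For the easy implication $\varrho_\HH(g^{n+1}(1),g^n(1)) \to 0 \Rightarrow \varrho_U(f^{n+1}(z),f^n(z)) \to 0$, the display above handles $z=\pi(1)$ directly. To extend to arbitrary $z$, I would invoke the (standard) base-point independence of the limit $\lim_n \varrho_\HH(g^{n+1}(w),g^n(w))$, a consequence of the Baker--Pommerenke--Cowen classification of $g$ (which uses only that $g$ has no fixed points in $\HH$, inherited from $f$). Thus the hypothesis at $w=1$ propagates to every $w$ covering $z$, and the display closes the easy direction.

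For the converse, suppose $c_n := \varrho_U(f^{n+1}(z),f^n(z)) \to 0$ and, for each $n$, pick $\sigma_n \in \Gamma$ nearly realizing the infimum, so that $\varrho_\HH(w_{n+1},\sigma_n w_n) \to 0$. It suffices to show $\sigma_n = \mathrm{id}$ for all large $n$: this forces $\varrho_\HH(w_{n+1},w_n) \to 0$, and the base-point independence above then yields $\varrho_\HH(g^{n+1}(1),g^n(1)) \to 0$. The key tool is the equivariance $g \circ \sigma = g_\ast(\sigma) \circ g$, where $g_\ast \colon \Gamma \to \Gamma$ is the injective homomorphism induced by $g$ on the covering group; applying Schwarz--Pick for $g$ yields
\[
\varrho_\HH\bigl(w_{n+2},g_\ast(\sigma_n)w_{n+1}\bigr) \;\le\; \varrho_\HH(w_{n+1},\sigma_n w_n),
\]
so any non-trivial $\sigma_n$ pushes forward to a non-trivial $g_\ast(\sigma_n)$ with equally small displacement at $w_{n+1}$.

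The main obstacle is to rule out $\sigma_n \ne \mathrm{id}$ along a subsequence. The $g_\ast$-propagation above would, in that scenario, generate infinitely many distinct deck transformations of shrinking displacement along the orbit $(w_n)$. By the Denjoy--Wolff Theorem applied to $g$, the orbit $(w_n)$ converges to a boundary point $\zeta \in \overline{\HH} \cup \{\infty\}$, and proper discontinuity of $\Gamma$ forces these shrinking-displacement elements to be parabolic fixing $\zeta$, i.e.~to sit in a cusp subgroup at $\zeta$. Closing the argument then requires combining $g_\ast$-equivariance with the no-fixed-point hypothesis on $g$ to show that this cusp scenario either yields $\varrho_\HH(g^{n+1}(1),g^n(1)) \to 0$ directly via a horocyclic analysis, or contradicts the setup. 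This cusp/equivariance analysis, rather than the bookkeeping in the preceding paragraphs, is where the technical substance of Bonfert's argument resides.
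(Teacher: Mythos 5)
The paper does not prove this result; it is imported verbatim from Bonfert \cite[Theorem~1.1]{bonfert} as a background tool. So there is no ``paper's own proof'' to compare against---the comparison must be between your outline and what a complete argument (e.g.\ Bonfert's) actually requires.

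Your setup is sound: the realization $\varrho_U(\pi(a),\pi(b)) = \inf_{\sigma\in\Gamma}\varrho_\HH(a,\sigma b)$, the monotonicity of $\varrho_\HH(w_{n+1},w_n)$ via Schwarz--Pick, and the equivariance $g\circ\sigma = g_*(\sigma)\circ g$ are all correct, and the easy implication (together with the appeal to base-point independence coming from Cowen's classification, which applies since $g$ inherits the absence of fixed points from $f$) is fine. The problem is that the hard direction is not actually proved. You reduce to showing $\sigma_n = \mathrm{id}$ eventually, observe that if $\sigma_n\neq\mathrm{id}$ along a subsequence then one is forced into a cusp/short-geodesic regime near the Denjoy--Wolff point, and then explicitly defer: ``Closing the argument then requires combining $g_*$-equivariance with the no-fixed-point hypothesis \dots this cusp/equivariance analysis \dots is where the technical substance of Bonfert's argument resides.'' That is precisely the content of the theorem, and it is left undone.

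Two further cautions. First, it is not clear that ``$\sigma_n=\mathrm{id}$ eventually'' is even the right target: if the orbit enters a cusp, $\sigma_n$ could stabilize at a fixed nontrivial parabolic $\tau$, in which case one must extract the conclusion $\varrho_\HH(w_{n+1},w_n)\to 0$ by some other route (e.g.\ comparing the almost-relation $w_{n+1}\approx\tau w_n$ with the fact that $\tau$ is a deck transformation while $g$ cannot coincide with one, since $f$ has no fixed points). A computation such as $\varrho_\HH(\sigma_n w_n, w_n)\to \lim_n\varrho_\HH(w_{n+1},w_n)$, which follows from your inequalities, shows the $\sigma_n$-displacement at $w_n$ stays \emph{bounded}, not that it collapses, so proper discontinuity alone does not immediately kill the nontrivial $\sigma_n$. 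Second, Bonfert's published proof does not proceed by a thick--thin/Margulis-type decomposition at all but by an elementary quantitative lemma relating $\varrho_U$ and $\varrho_\HH$ along orbits, so even the architecture you sketch for the missing step differs from the one in the source. In short: the scaffolding is reasonable, but the theorem's essential difficulty is identified and then skipped rather than resolved.
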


An easy consequence of this theorem is that the left hand side of the equation is either satisfied for every $z\in U$ or for none.

The next theorem summarizes the results of Baker--Pommerenke--Cowen \cite{baker-pommer,cowen,pommer} on the dynamics of holomorphic maps in $\HH$. We use the notation from \cite{cowen}. (The equivalence of the Baker--Pommerenke and Cowen approaches were shown by K\"onig in \cite{konig}.) 

\begin{Cowenthm}[{\cite[Theorem 3.2]{cowen}, see also \cite[Lemma 1]{konig}}] 
Let $g: \HH \to \HH$ be a holomorphic map such that $g^n \to\infty$ as $n \to \infty$. Then there exists a simply connected domain $V \subset \HH$, a domain $\Omega$ equal to $\HH$ or $\C$, a holomorphic map $\varphi: \HH \to \Omega$, and a M\"obius transformation $T$ mapping $\Omega$ onto itself, such that:
\begin{itemize}
\item[$(a)$] $V$ is absorbing in $\HH$ for $g$,
\item[$(b)$] $\varphi(V)$ is absorbing in $\Omega$ for $T$,
\item[$(c)$] $\varphi \circ g = T \circ \varphi$ on $\HH$,
\item[$(d)$] $\varphi$ is univalent on $V$.
\end{itemize}
Moreover, $\varphi$, $T$ depend only on $g$. In fact $($up to
a conjugation of $T$ by a M\"obius transformation preserving $\Omega)$, one of
the following cases holds:
\begin{itemize}
\item $\Omega = \C$, $T(\omega) = \omega + 1$ $($parabolic~I type $)$,
\item $\Omega = \HH$, $T(\omega) = \omega \pm i$ $($parabolic~II type$)$,
\item $\Omega = \HH$, $T(\omega) = a\omega$ for some $a > 1$ $($hyperbolic type$)$.
\end{itemize}
\end{Cowenthm}

\begin{rem*}
An equivalent description of the three cases can be done by taking 
\begin{itemize}
\item $\Omega = \C$ $($parabolic~I type $)$,
\item $\Omega = \{z\in \C: \Im (z) > 0\}$ $($parabolic~II type$)$,
\item $\Omega = \{z\in \C: 0 < \Im (z) < b\}$ for some $b > 0$ $($hyperbolic type$)$
\end{itemize}
and $T(\omega) = \omega + 1$ in all three cases.
\end{rem*}

The following theorem gathers the K\"onig results from \cite{konig}.

\begin{thm}[\cite{konig}]\label{thm:konig}
Let $U$ be a hyperbolic domain in $\C$ and let $f: U \to U$ be a holomorphic
map, such that $f^n \to \infty$ as $n \to \infty$. Suppose that for every
closed curve $\gamma \subset U$ there exists $n > 0$ such that $f^n(\gamma)$ is
contractible in $U$. Then there exists a simply
connected domain $W \subset U$, a domain $\Omega$ and a transformation $T$ as
in Cowen's Theorem, and a holomorphic map $\psi: U \to \Omega$, such that:
\begin{itemize}
\item[$(a)$] $W$ is absorbing in $U$ for $f$,
\item[$(b)$]  $\psi(W)$ is absorbing in $\Omega$ for $T$,
\item[$(c)$] $\psi \circ f = T \circ \psi$ on $U$,
\item[$(d)$] $\psi$ is univalent on $W$.
\end{itemize}
Moreover, 
\begin{itemize}
\item 
$T$ is of parabolic~I type if and only if 
\[
\lim_{n\to\infty} \frac{|f^{n+1}(z)-f^n(z)|}{\dist(f^n(z), \bd U)} = 0 \quad \text{for every } z\in U,
\]
\item 
$T$ is of parabolic~II type if and only if 
\[
\varliminf_{n\to\infty} \frac{|f^{n+1}(z)-f^n(z)|}{\dist(f^n(z), \bd U)} > 0  \text{ for every } z\in U 
\quad \text{and} \quad
\inf_{z\in U}\varlimsup_{n\to\infty} \frac{|f^{n+1}(z)-f^n(z)|}{\dist(f^n(z), \bd U)} =0, 
\]
\item
$T$ is of hyperbolic type if and only if 
\[
\inf_{z\in U} \inf_{n\geq 0} \frac{|f^{n+1}(z)-f^n(z)|}{\dist(f^n(z), \bd U)} > 0.
\]
\end{itemize}

Furthermore, if $\tilde f:\C \to \clC$ is a meromorphic map with finitely many poles, and $U$ is a periodic Baker domain of period $p$, then the above assumptions are satisfied for $f = \tilde f^p$, and consequently, there exists a simply connected domain $W$ in $U$ with the properties $(a)$--$(d)$ for $f = \tilde f^p$.
\end{thm}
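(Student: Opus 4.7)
The plan is to lift $f$ by the universal covering $\pi:\HH\to U$ to a self-map $g:\HH\to\HH$, apply Cowen's Theorem to $g$, and then descend the resulting semi-conjugacy to $U$, using the loop-contractibility hypothesis in a crucial way. Cowen's Theorem supplies a simply connected absorbing $V\subset\HH$ for $g$, a holomorphic $\varphi:\HH\to\Omega$ univalent on $V$, and a M\"obius $T$ with $\varphi\circ g=T\circ\varphi$; the task is to transport these objects through $\pi$.

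The descent is the heart of the argument. Each deck transformation $\tau$ of $\pi$ satisfies $g\circ\tau=\hat\tau\circ g$ for a unique $\hat\tau\in\mathrm{Deck}(\pi)$, defining the endomorphism of $\mathrm{Deck}(\pi)\cong\pi_1(U)$ induced by $f$. The contractibility hypothesis is equivalent to saying that every $\tau$ is eventually sent to the identity under iteration of this endomorphism, so $g^n\circ\tau=g^n$ for $n$ large (depending on $\tau$). Applying $\varphi$ and using injectivity of $T$ yields $\varphi\circ\tau=\varphi$ for every $\tau$. Hence $\varphi$ descends to a well-defined holomorphic $\psi:U\to\Omega$ with $\psi\circ\pi=\varphi$, and $\psi\circ f=T\circ\psi$ follows by a diagram chase, giving (c).

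Set $W:=\pi(V)$. If $w_1,w_2\in V$ had $\pi(w_1)=\pi(w_2)$, then $w_2=\tau(w_1)$ for some deck transformation and hence $\varphi(w_1)=\varphi(w_2)$, contradicting univalence of $\varphi$ on $V$; so $\pi|_V:V\to W$ is biholomorphic, $W$ is simply connected, $\psi|_W=\varphi\circ(\pi|_V)^{-1}$ is univalent (giving (d)), and $\psi(W)=\varphi(V)$ inherits absorption from Cowen's Theorem (giving (b)). For (a), forward invariance $f(W)=\pi(g(V))\subset W$ is immediate; given a compact $K\subset U$ one lifts each $z\in K$ to some $w\in\HH$, uses that $V$ absorbs $\HH$ to find $n_z$ with $g^{n_z}(w)\in V$, and then combines openness of $W$, continuity of $f^{n_z}$, compactness of $K$, and $f(W)\subset W$ to produce a uniform $n$ with $f^n(K)\subset W$.

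For the characterization, normalize $T(\omega)=\omega+1$ (alternative description) and write $\omega_n:=\psi(z)+n$ for $z\in W$. The Schwarz--Pick Lemma applied both to the inclusion $W\subset U$ and to $\psi:U\to\Omega$ sandwiches the relevant quantity:
\[
\varrho_\Omega(\omega_n,\omega_{n+1})\ \le\ \varrho_U(f^{n+1}(z),f^n(z))\ \le\ \varrho_{\psi(W)}(\omega_n,\omega_{n+1}).
\]
In parabolic~I ($\Omega=\C$) absorption forces $\psi(W)$ to contain discs of unbounded Euclidean radius along the orbit, driving the right side to $0$; in parabolic~II ($\Omega$ a half-plane) the lower bound is a positive $z$-dependent constant, while a large imaginary part of $\psi(z)$ makes that constant arbitrarily small, yielding $\inf_z\varlimsup=0$; in the hyperbolic case ($\Omega$ a strip) both sides are pinched between positive constants. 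Converting between the hyperbolic and the Euclidean ratios via Theorem~\ref{thm:rippon} (upper bound) and Lemma~\ref{lem:hyp} (lower bound) gives the three stated equivalences. Finally, for a meromorphic $\tilde f$ with finitely many poles and a periodic Baker domain $U$, a topological argument using $\tilde f^{pn}\to\infty$ and the fact that $\clC\setminus U$ has only finitely many ``pole-components'' near $\infty$ shows that every loop in $U$ is eventually contractible under $f=\tilde f^p$. The most delicate step is comparing $\varrho_{\psi(W)}$ with $\varrho_\Omega$ uniformly along orbits: absorption gives one-sided control on $\psi(W)$ from within, but the clean separation into the three types rests on the specific M\"obius geometry of $T$ on the standard model $\Omega$.
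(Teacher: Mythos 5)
The paper does not prove Theorem~\ref{thm:konig}: it is imported from K\"onig's paper \cite{konig}, and the only hint the present authors give is the sentence immediately after the statement, namely that under its hypotheses the covering $\pi$ is univalent on Cowen's domain $V$, so one may take $W=\pi(V)$ and $\psi=\varphi\circ\pi^{-1}$. Your construction of $W$ and $\psi$, and the descent argument underlying it --- the loop hypothesis translated into eventual triviality $g^n\circ\tau=g^n$ of the deck-group endomorphism, hence $\varphi\circ\tau=\varphi$ by injectivity of $T^n$, hence $\psi$ well defined and $\pi|_V$ injective because $\varphi|_V$ is --- reproduce and correctly fill in exactly that remark, and the compactness argument for absorption of $W$ is standard. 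So for parts~(a)--(d) your outline matches the route the paper indicates.

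For the type characterization and the finitely-many-poles statement you only sketch, and the sketch is not yet a proof. Two concrete points. First, the pinching
$\varrho_\Omega(\omega_n,\omega_{n+1})\le\varrho_U(f^n(z),f^{n+1}(z))\le\varrho_{\psi(W)}(\omega_n,\omega_{n+1})$
requires $f^n(z)\in W$ for the right-hand inequality, so it is an eventual estimate even for $z\in W$; this matters when you claim a uniform positive lower bound for all $n\ge0$ in the hyperbolic case. Second, and more seriously, to pass from the hyperbolic quantity to the Euclidean ratio you invoke Theorem~\ref{thm:rippon}, whose constant $C$ depends on the compact set $K$, hence on the base point $z$. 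In the parabolic~II case the claim $\inf_z\varlimsup_n|f^{n+1}(z)-f^n(z)|/\dist(f^n(z),\bd U)=0$ is obtained precisely by pushing $z$ so that $\Im\psi(z)\to\infty$; since $C=C(z)$ may grow simultaneously, your argument does not close without an additional idea (K\"onig works with $\psi$ and $\Omega$ directly rather than through Rippon's estimate, which in any case postdates his paper). The ``topological argument'' for meromorphic maps with finitely many poles is left entirely unexplained. These gaps are not evidence that you misunderstood the theorem --- they are exactly the content proved in \cite{konig} --- but they mean the second half of your proposal should be read as a plan, not a verified reconstruction, and the paper itself offers nothing to check it against.
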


In fact, if under the assumptions of Theorem~\ref{thm:konig}, we take $V$ and $\varphi$ from Cowen's Theorem for a lift $g$ of $f$ by a universal covering $\pi : \HH \to U$, then $\pi$ is univalent in $V$ and one can take $W = \pi(V)$ and $\psi = \varphi \circ \pi^{-1}$, which is well defined in $U$.

\begin{rem} \label{rem:nice} It was shown by the authors in \cite{bfjk} that under the conditions of Cowen's Theorem or Theorem~\ref{thm:konig}, one can choose the absorbing domain $W$ to be nice. 
\end{rem}

\begin{defn}[\bf Isolated boundary fixed point] \label{ibfp} Let $\zeta$ be an isolated point $\zeta$ of the boundary of $U$ in $\clC$, i.e. there exists a neighborhood $V\subset \clC$ of $\zeta$ such that $V\setminus \{\zeta\} \subset U$. Since $f(U)\subset U$ and $U$ is hyperbolic, it follows from Picard's Theorem that $f$ extends holomorphically to $V\cup \{\zeta\}$. If $f(\zeta)=\zeta$, we say that $\zeta$ is an {\em isolated boundary fixed point of $f$}. 
\end{defn}
 
The following theorem was proved by Bonfert in \cite{bonfert}.

\begin{thm}[{\cite[Theorem 1.4]{bonfert}}]\label{thm:bonfert1.4}
Let $U$ be a hyperbolic domain in $\C$ and let $f: U \to U$ be a holomorphic
map without fixed points and without isolated boundary fixed points. Then
there exist a domain $X\subset \C$, a non-constant holomorphic map $\Psi: U \to
X$ and a M\"obius transformation $S$ mapping $X$ onto itself, such that 
\[
\Psi\circ f = S \circ \Psi.
\]
Moreover, if $\varrho_U(f^{n+1}(z), f^n(z)) \to 0$ for $z\in U$, then
$X = \C$, $S(\omega) = \omega + 1$. Otherwise, the domain $X$ is
hyperbolic. 
\end{thm}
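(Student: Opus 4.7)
The natural approach is to lift $f$ to the universal cover, apply Cowen's Theorem there, and descend the resulting semiconjugacy to $U$. Let $\pi: \HH \to U$ be a universal covering and let $g: \HH \to \HH$ be a lift of $f$, so that $\pi \circ g = f \circ \pi$. Since $f$ has no fixed points in $U$, neither does $g$ in $\HH$. If $g$ is a M\"obius automorphism of $\HH$, it is parabolic or hyperbolic, and after an auxiliary M\"obius conjugation it becomes a translation or dilation, and the statement follows by passing to the quotient of $\HH$ by the deck group. Otherwise, the Denjoy--Wolff Theorem gives $\zeta \in \bd\HH \cup \{\infty\}$ with $g^n \to \zeta$ almost uniformly, and a M\"obius change of coordinate moves $\zeta$ to $\infty$. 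Cowen's Theorem then yields an absorbing domain $V \subset \HH$, a domain $\Omega \in \{\HH, \C\}$, a M\"obius $T: \Omega \to \Omega$ of parabolic~I, parabolic~II, or hyperbolic type, and a holomorphic $\varphi: \HH \to \Omega$ univalent on $V$ with $\varphi \circ g = T \circ \varphi$.

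The central step is to descend $\varphi$ through $\pi$. For every deck transformation $\sigma \in \Gamma$ of the covering, the identity $\pi \circ g \circ \sigma = f \circ \pi = \pi \circ g$ forces $g \circ \sigma = g_{\ast}(\sigma) \circ g$ for an endomorphism $g_{\ast}$ of $\Gamma$. I would then analyze the induced action of $\Gamma$ on the absorbing image $\varphi(V) \subset \Omega$: on the intersection $\varphi(V) \cap \varphi(\sigma(V))$, the map $\varphi \circ \sigma \circ (\varphi|_V)^{-1}$ is a local holomorphic self-map of $\Omega$. Extending this local map along the dynamics of $T$ (on $\varphi(V)$, which is absorbing for $T$) and invoking the rigidity of Cowen's normal form---any holomorphic self-map of $\Omega$ that commutes with $T$ on an absorbing set must extend to a M\"obius automorphism of $\Omega$ in the centralizer $Z_T$ of $T$---produces a homomorphism $\rho: \Gamma \to Z_T$ such that $\varphi \circ \sigma = \rho(\sigma) \circ \varphi$ on all of $\HH$.

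Once $\rho$ is in hand, $\varphi$ descends to a holomorphic map $\Psi$ from $U$ into the quotient $X = \Omega / \rho(\Gamma)$, and $T$ induces the required M\"obius $S$ on $X$ with $\Psi \circ f = S \circ \Psi$. The final classification follows from Cowen's types: in the parabolic~I case ($\Omega = \C$, $T(\omega) = \omega + 1$, $Z_T$ = translations of $\C$) the hypothesis that $f$ has no isolated boundary fixed points is used to force $\rho$ to be trivial, giving $X = \C$ and $S = T$ exactly; in the other two cases $\rho(\Gamma)$ is a discrete rank-one subgroup of $Z_T$ (imaginary translations or real dilations of $\HH$), and the quotient $\Omega / \rho(\Gamma)$ is biholomorphic to a hyperbolic domain in $\C$. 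The equivalence between the metric condition $\varrho_U(f^{n+1}(z), f^n(z)) \to 0$ and the parabolic~I alternative is then supplied by Theorem~\ref{thm:bonfert1.1}.

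The principal obstacle is building the homomorphism $\rho$ and controlling $\rho(\Gamma)$ via the no-isolated-boundary-fixed-point hypothesis. The hypothesis must translate into a structural constraint on the parabolic elements of $\Gamma$---precisely those corresponding to isolated punctures of $U$. If $f$ fixed such a puncture, the corresponding parabolic deck transformation would commute with $g$ and its image under $\rho$ would be non-discrete (or produce a rank-two translation lattice in $\C$ in the parabolic~I case), collapsing the quotient $\Omega/\rho(\Gamma)$ to a torus or another non-planar surface and obstructing its realization as a subdomain $X \subset \C$. Ruling out exactly these configurations is the crux of the descent, and the only place where the hypothesis is indispensable.
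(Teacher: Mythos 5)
This statement is quoted by the paper as an external result (Bonfert, Theorem~1.4); the paper supplies no proof of it, so there is no internal argument to compare against. Bonfert's actual method, following Marden--Pommerenke, is a direct renormalization on $U$: one studies the normalized orbit maps $\Phi_n(z) = \bigl(f^n(z) - f^n(z_0)\bigr)/\bigl(f^{n+1}(z_0) - f^n(z_0)\bigr)$, shows normality via hyperbolic-metric estimates, extracts a subsequential limit $\Psi$, verifies $\Psi$ is non-constant (this is where the no-isolated-boundary-fixed-point hypothesis enters, through estimates on $\varrho_U(f^{n+1}, f^n)$ and Rippon-type bounds), and reads off that the limiting conjugating map $S$ is affine. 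The dichotomy $X = \C$ versus $X$ hyperbolic comes out of whether the normalizing scale factors $f^{n+1}(z_0) - f^n(z_0)$ eventually dominate $\dist(f^n(z_0), \bd U)$ or not. Your route is genuinely different: you lift to the universal cover, apply Cowen's theorem upstairs, and try to push the Cowen coordinate $\varphi$ down to $U$.

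The descent, however, has a real gap exactly where you flag it as the ``crux.'' You assert that $\varphi \circ \sigma \circ (\varphi|_V)^{-1}$ extends, via the dynamics of $T$ and ``rigidity of Cowen's normal form,'' to a global element of $Z_T$, yielding a homomorphism $\rho: \Gamma \to Z_T$ with $\varphi \circ \sigma = \rho(\sigma) \circ \varphi$. But $\varphi$ and $\varphi \circ \sigma$ do \emph{not} both semiconjugate $g$ to $T$: from $\pi \circ g = f \circ \pi$ one only gets $g \circ \sigma = g_\ast(\sigma) \circ g$ for the induced endomorphism $g_\ast$ of $\Gamma$, and the computation $\varphi \circ g_\ast(\sigma) \circ g = T \circ \varphi \circ \sigma$ shows that the candidate automorphism attached to $\sigma$ must simultaneously satisfy $\rho(g_\ast(\sigma)) = T \circ \rho(\sigma) \circ T^{-1}$; since you want $\rho$ to land in $Z_T$, this collapses to $\rho \circ g_\ast = \rho$, an eventual-invariance constraint you never establish. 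Nothing in Cowen's theorem guarantees that $\varphi\circ\sigma$ differs from $\varphi$ by an automorphism of $\Omega$ at all, let alone one in $Z_T$; $\varphi$ is determined by $g$, while $\varphi\circ\sigma$ is the Cowen coordinate for $\sigma^{-1}g\sigma$, and no canonical identification between the two is supplied. Similarly, the claim that the no-isolated-boundary-fixed-point hypothesis forces $\rho$ to be trivial (parabolic~I) or of rank at most one (otherwise) is exactly the content that needs proof, not a consequence that can be asserted. As written, the outline correctly identifies the hard step but does not close it, whereas the renormalization approach avoids the covering-group descent entirely.
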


The existence of nice absorbing regions in arbitrary hyperbolic domains was proved by the authors in \cite{bfjk}. 

\begin{thm}[{\cite[Theorem A]{bfjk}}]\label{thm:bfjk}
Let $U$ be a hyperbolic domain in $\C$ and let $f: U \to U$ be a holomorphic map, such that $f^n \to \infty$ as $n \to
\infty$. Then there exists a nice absorbing domain $W$ in $U$ for $f$, such that $f$ is locally univalent on $W$. Moreover, for every $z \in U$ and every sequence of positive numbers $r_n$, $n \geq 0$ with $\lim_{n\to\infty} r_n = \infty$,
the domain $W$ can be chosen such that
\[
W \subset \bigcup_{n=0}^\infty \DD_U(f^n(z), r_n).
\]
\end{thm}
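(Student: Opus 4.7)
The plan is to build $W$ as a ``tube'' around a forward orbit of a chosen point, formed from a chain of overlapping hyperbolic discs whose radii tend to infinity slowly enough. Fix $z \in U$ from the statement and set $z_n = f^n(z)$. By Schwarz--Pick the sequence $d_n := \varrho_U(z_n, z_{n+1})$ is non-increasing, hence bounded. I select a strictly increasing sequence $s_n \to \infty$ with $s_n \leq r_n$ and $s_0 > \sup_k d_k$, set $D_n := \DD_U(z_n, s_n)$ and
\[
W_0 := \bigcup_{n=0}^\infty D_n.
\]
The condition $s_n > d_n$ places $z_{n+1}$ in $D_n$, so consecutive discs overlap and $W_0$ is a domain; by construction $W_0 \subset \bigcup_n \DD_U(z_n, r_n)$.

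Forward invariance of $\overline{W_0}$ and the nice-absorbing conditions (b), (c) follow from Schwarz--Pick combined with the strict monotonicity of $(s_n)$. Indeed $f(\overline{D_k}) \subset \overline{\DD_U(z_{k+1}, s_k)} \subsetneq D_{k+1}$, so $f(\overline{W_0}) \subset W_0$ (using the local finiteness of $\{D_n\}$ discussed below), and iterating gives $f^n(\overline{W_0}) \subset f^{n-1}(W_0)$. The equality $f^n(\overline{W_0}) = \overline{f^n(W_0)}$ follows similarly, each side being a union of the compact sets $f^n(\overline{D_k})$. For absorption, any compact $K \subset U$ satisfies $\sup_{w \in K}\varrho_U(f^n(w), z_n) \leq R_K := \sup_{w\in K}\varrho_U(w,z)$ by Schwarz--Pick, so $f^n(K) \subset D_n$ as soon as $s_n > R_K$. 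For (c), $\varrho_U(z, z_m) \to \infty$ because $z_m \to \infty$ in $\C$ forces $z_m$ to leave every hyperbolic compact; choosing $s_m$ to grow strictly more slowly than $\varrho_U(z, z_m)$ gives $w \notin D_m$ for $m$ large, so $\bigcap_n \bigcup_{m\geq n} D_m = \emptyset$.

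The most delicate requirement is (a), $\overline{W_0} \subset U$ in $\C$, which drives the actual constraint on $(s_n)$. Each $\overline{D_n}$ is hyperbolically compact, hence compactly contained in $U$; the threat is accumulation in $\C$ of the tails onto $\bd U$. Lemma~\ref{lem:CG} bounds $\diam_\C(D_n)$ by a function of $s_n$ and $\dist(z_n, \bd U)$, roughly of the form $\diam_\C(D_n) \leq C\, \dist(z_n, \bd U)(e^{2 s_n} - 1)$. I choose $s_n$ inductively, at each step small enough that the tail $\bigcup_{m\geq n} D_m$ stays at positive Euclidean distance from every fixed compact subset of $\bd U \setminus \{\infty\}$ and shrinks toward $\infty$; this also guarantees that $\{D_n\}$ is locally finite in $\C$ (essential for the argument of the previous paragraph). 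Since $s_n$ only has to eventually exceed any prescribed threshold, the slow growth is compatible with absorption.

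Local univalence of $f$ on $W$ is arranged by a final perturbation: the critical set $\{f' = 0\}$ is discrete in $U$ and meets each $\overline{D_n}$ in finitely many points, and, more generally, the set $E := \bigcup_{n \geq 0} f^{-n}(\{f'=0\}) \cap W_0$ can be covered by a sufficiently sparse family of small hyperbolic neighbourhoods whose removal leaves an open connected subdomain $W \subset W_0 \setminus E$ with $W \cap \{f'=0\} = \emptyset$ (local univalence) and $f^n(W) \cap \{f'=0\} = \emptyset$ (so forward invariance $f(W) \subset W$ and all the nice-absorbing properties survive). The hard part will be the simultaneous balancing of $(s_n)$: absorption forces $s_n \to \infty$, while (a) pins it down in a manner sensitive to the geometry of $\bd U$ near the orbit, which is subtle because the hypothesis does \emph{not} assume that $\infty$ is isolated in $\bd U$---the boundary of $U$ may cluster arbitrarily close to the trajectory. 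This intertwined geometric and dynamical control of $(s_n)$, together with the avoidance of $E$, is the technical heart of the construction.
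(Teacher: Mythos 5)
This statement is cited (as Theorem A of \cite{bfjk}) rather than proved in the present paper, so there is no internal proof to compare against; I will therefore assess your proposal on its own merits.

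Your overall scheme --- building $W$ as a chain of overlapping hyperbolic balls $\DD_U(f^n(z),s_n)$ with $s_n$ increasing to $\infty$ at a carefully controlled rate --- is indeed the natural one and is consistent with the form of the conclusion (and with the remark after the theorem concerning shrinking radii in the parabolic~I case). The treatment of connectedness (consecutive overlap via $s_n>\varrho_U(z_n,z_{n+1})$), of forward invariance via Schwarz--Pick, and of absorption via $\varrho_U(f^n(K),z_n)\le \sup_{w\in K}\varrho_U(w,z)$ are all correct. However, the key step you yourself flag as the ``technical heart'' is not in fact closed by the ideas you offer, and the specific estimate you invoke is false.

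Concretely, the bound $\diam_\C\bigl(\DD_U(z_n,s_n)\bigr)\le C\,\dist(z_n,\bd U)\,(e^{2s_n}-1)$ does \emph{not} follow from Lemma~\ref{lem:CG} for a general hyperbolic domain. That kind of control requires a lower bound of the form $\varrho_U(w)\ge c/\dist(w,\bd U)$, which holds when $U$ is simply connected but fails in the multiply connected case, where Lemma~\ref{lem:CG} only gives a lower bound with an extra $\log(1/\dist(w,\bd U))$ in the denominator. Near a puncture (or more generally in a cusp) the hyperbolic density is of order $1/(|w-p|\log(1/|w-p|))$, so a hyperbolic ball of radius $s$ around a point at Euclidean distance $\delta$ from $p$ reaches points at distance roughly $\delta^{e^{-s}}$, not $\delta\,e^{-cs}$; the Euclidean diameter is not comparable to $\dist(z_n,\bd U)\,e^{O(s_n)}$. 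As a consequence the inductive criterion you propose for keeping the tails away from $\bd U$ is not justified, and the claim $\overline{W_0}\subset U$ is genuinely unproven. (It also affects the local finiteness in $\C$ that you rely on to identify $\overline{W_0}$ with $\bigcup_n\overline{D_n}$ and to get $\overline{f^n(W_0)}=f^n(\overline{W_0})$.) The correct bookkeeping has to be done purely on the hyperbolic side --- for instance, by ensuring $\varrho_U(z,z_n)-s_n\to\infty$ one gets local finiteness in $U$, but that alone does not exclude accumulation of $\bigcup_n D_n$ onto a finite boundary point, so a further argument is needed and is missing.

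The local univalence step is also incomplete as written. To have both $W\cap\{f'=0\}=\emptyset$ and $f(W)\subset W$ you must excise (the $W_0$-part of) the entire set $E=\bigcup_{n\ge0}f^{-n}(\{f'=0\})$, not just the critical set itself. There is no reason for $E\cap W_0$ to be discrete or closed in $W_0$: if $\{f'=0\}$ accumulates at $\infty$ (as it typically does for an entire or meromorphic $f$), the sets $f^{-n}(\{f'=0\})\cap W_0$ can pile up, and removing $E$ from $W_0$ may fail to give an open connected domain, let alone preserve (a)--(c). Saying one ``covers $E$ by a sufficiently sparse family of small hyperbolic neighbourhoods'' does not resolve this; you would need $E\cap W_0$ to be closed and discrete in $W_0$, which is exactly what must be argued. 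A more robust route, consistent with the Cowen machinery used throughout the paper, is to work with a lift $g\colon\HH\to\HH$ and the Cowen domain $V$: since $\varphi$ is univalent on $V$ and $\varphi\circ g=T\circ\varphi$, $g$ is injective on $V$, hence $g'\ne0$ there, and $f'\ne 0$ on $\pi(V)$ follows because $\pi$ is a local biholomorphism. Grafting that observation into your hyperbolic-ball construction (or building $W$ directly as a modified projection of $V$) avoids the discreteness issue entirely. Finally, a minor point: you require $s_n\le r_n$ and $s_0>\sup_k d_k$ simultaneously, which may be inconsistent for small $n$ if $r_0$ is tiny; one should start the chain at a large index $N$ with $r_n$ already large, and absorb by iterating a few extra times.
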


\begin{rem*} If $f$ is of parabolic~{\rm I} type, then $W$ can be chosen such that  
$W \subset \bigcup_{n=0}^\infty \DD_U(f^n(z), b_n)$ 
for a sequence $b_n$ with $\lim_{n\to\infty} b_n = 0$ and $b_n < b$ for an arbitrary given $b > 0$, see \cite[Proposition 3.1]{bfjk}.
\end{rem*}

\section{Characterization of parabolic~I type: Proof of Theorem A}\label{sec:parab}

Let $U$ be a hyperbolic domain in $\C$ and let $f: U \to U$ be a holomorphic map without fixed points. Consider a universal covering $\pi : \HH \to U$ and a lift $g: \HH \to \HH$ of the map $f$ by $\pi$. 
Then $g$ has no fixed points, so by the Denjoy--Wolff Theorem, $g^n \to \zeta$ for a point $\zeta$ in the boundary of $\HH$ in $\clC$. Conjugating $g$ by a M\"obius map, we can assume $\zeta = \infty$. Consider the map $T: \Omega \to \Omega$ from Cowen's Theorem for the map $g$. By properties of a universal covering, for different choices of $\pi$ and $g$, the suitable maps $T$ are conformally conjugate, so in fact the type of $T$ does not depend on the choice of $\pi$ and $g$. Hence, we can state the following definition.

\begin{defn*}[{\bf\boldmath Type of $f$}{}] The map $f$ is of parabolic~I, parabolic~II or hyperbolic type if the same holds for its lift $g$.
\end{defn*}

\begin{thmA}
Let $U$ be a hyperbolic domain in $\C$ and let $f: U \to U$ be a holomorphic
map without fixed points and without isolated boundary fixed points. Then the
following statements are equivalent:
\begin{itemize}
\item[$(a)$] $f$ is of parabolic~I type,
\item[$(b)$] $\varrho_U(f^{n+1}(z), f^n(z)) \to
0$ as $n \to \infty$ for some $z \in U$, 
\item[$(c)$] $\varrho_U(f^{n+1}(z), f^n(z)) \to
0$ as $n \to \infty$ almost uniformly on $U$, 
\item[$(d)$] $|f^{n+1}(z)- f^n(z)|/\dist(f^n(z), \bd U) \to
0$ as $n \to \infty$ for some $z \in U$,
\item[$(e)$] $|f^{n+1}(z)- f^n(z)|/\dist(f^n(z), \bd U) \to
0$ as $n \to \infty$ almost uniformly on $U$.
\end{itemize}
\end{thmA}

\begin{proof} 
First we prove \mbox{(a) $\Rightarrow$ (b)}. Since $f$ is of parabolic~I type, we have $\Omega = \C$, $T(\omega) = \omega +1$ in
Cowen's Theorem for a lifted map $g: \HH \to \HH$. We claim that for every $\omega \in \C$ there exists $m \in\N$
and a sequence $d_n > 0$ with $d_n\to\infty$ as $n \to \infty$, such that 
\begin{equation}
\label{eq:D}
\D(T^n(\omega), d_n) \subset \varphi(V) \qquad \text{for every } n \ge m,
\end{equation}
for $\varphi$, $V$ from Cowen's Theorem for $g$. 
Indeed, if \eqref{eq:D} does not hold, then $\D(T^n(\omega), d) \not\subset \varphi(V)$ for some $d > 0$ and infinitely many $n$, which contradicts assertion~(b) of Cowen's Theorem for $K = \overline{\D(\omega, d)}$. 

Since $|T^{n+1}(\omega) - T^n(\omega)| = |\omega + n+ 1 - (\omega + n)| = 1$ and $d_n \to \infty$, Lemma~\ref{lem:CG} implies 
\[
\varrho_{\D(T^n(\omega), d_n)}(T^{n+1}(\omega),
T^n(\omega)) \to 0,
\]
so by \eqref{eq:D} and the Schwarz--Pick Lemma,
\[
\varrho_{\varphi(V)}(T^{n+1}(\omega),
T^n(\omega)) \leq \varrho_{\D(T^n(\omega), d_n)}(T^{n+1}(\omega),
T^n(\omega)) \to 0
\]
as $n \to \infty$. By Cowen's Theorem, $\varphi$ is univalent on $V$, so for $\omega \in \varphi(V)$ and $z =  \pi((\varphi|_V)^{-1}(\omega))$, by the Schwarz--Pick Lemma applied to  the map $\pi\circ(\varphi|_V)^{-1}$, we have
\[
\varrho_U(f^{n+1}(z), f^n(z)) \leq \varrho_{\varphi(V)}(T^{n+1}(\omega),
T^n(\omega))\to 0,
\]
which shows (b). 

Now we show \mbox{(b) $\Rightarrow$ (a)}. By Theorem~\ref{thm:bonfert1.1}, we
have $\varrho_\HH(g^{n+1}(w), g^n(w)) \to 0$ for some $w \in \HH$. Suppose that $g$ is not of parabolic~I type. Then $\Omega = \HH$ in Cowen's Theorem for the map $g$, so by the Schwarz--Pick Lemma applied to the map $\varphi$, we have 
\[
\varrho_\HH(T^{n+1}(\varphi(w)), T^n(\varphi(w))) = \varrho_\HH(\varphi(g^{n+1}(w)),
\varphi(g^n(w))) \leq \varrho_\HH(g^{n+1}(w),
g^n(w)) \to 0,
\]
which is not possible, since for $T(\omega) = a \omega$ or $T(\omega) = \omega
\pm i$ we have 
\[
\varrho_\HH(T^{n+1}(\varphi(w)), T^n(\varphi(w))) =
\varrho_\HH(T(\varphi(w)), \varphi(w)) >0.
\]
Hence, $\Omega = \C$, $T(\omega) = \omega +1$ and $g$ is of parabolic~I type. 

The implication \mbox{(c) $\Rightarrow$ (b)} is trivial. To show \mbox{(b) $\Rightarrow$ (c)}, note first that by Theorem~\ref{thm:bonfert1.1}, the pointwise convergence holds for every $z \in U$. Take a compact set $K \subset U$ and suppose that the convergence is not uniform on $K$. This means that there exist sequences $z_j \in K$, $n_j \to \infty$ as $j \to \infty$, and a constant $c > 0$, such that 
\[
\varrho_U(f^{n_j+1}(z_j), f^{n_j}(z_j)) > c.
\]
Passing to a subsequence, we can assume $z_j \to z$  for some $z \in K$. Then
$\varrho_U(z_j, z) \to 0$, so by the Schwarz--Pick Lemma, for every $n \geq 0$
\[
\varrho_U(f^n(z_j), f^n(z)) \leq \varrho_U(z_j, z)\to 0
\]
as $j \to \infty$. Hence, since $\varrho_U(f^{n_j+1}(z), f^{n_j}(z)) \to 0$ by the pointwise convergence, we have 
\begin{align*}
0 < c &< \varrho_U(f^{n_j+1}(z_j), f^{n_j}(z_j)) \\
&\leq \varrho_U(f^{n_j+1}(z_j), f^{n_j+1}(z)) + \varrho_U(f^{n_j+1}(z), f^{n_j}(z)) + \varrho_U(f^{n_j}(z), f^{n_j}(z_j))\\
&\leq 2 \varrho_U(z_j, z) + \varrho_U(f^{n_j+1}(z), f^{n_j}(z)) \to 0,
\end{align*}
which is a contradiction. This ends the proof of \mbox{(b) $\Leftrightarrow$ (c)}.

The implication \mbox{(c) $\Rightarrow$ (e)} follows from Theorem~\ref{thm:rippon} for $z' = f(z)$ and the implication \mbox{(e) $\Rightarrow$ (d)} is trivial. 
To show \mbox{(d) $\Rightarrow$ (b)}, we use Lemma~\ref{lem:hyp} for the points $f^n(z)$, $f^{n+1}(z)$. 
In this way we have proved the equivalences (b) $\Leftrightarrow$ (c) $\Leftrightarrow$ (d) $\Leftrightarrow$ (e). 
\end{proof}

By Theorem~A and Theorem~\ref{thm:bonfert1.4}, we immediately obtain the following.

\begin{corA'}
The map $f$ is of parabolic~I type if and only if we have $X = \C$, $S(\omega) = \omega +1$ in Theorem~{\rm \ref{thm:bonfert1.4}}.
\end{corA'}

The following proposition gives a sufficient condition for a map $f$ to be of hyperbolic type.

\begin{prop}\label{prop:hyper}
Let $U$ be a hyperbolic domain in $\C$ and let $f: U \to U$ be a holomorphic
map without fixed points and without isolated boundary fixed points. If
\[
\inf_{z\in U}\lim_{n\to\infty}\varrho_U(f^{n+1}(z), f^n(z)) > 0,
\]
then $f$ is of hyperbolic type.
\end{prop}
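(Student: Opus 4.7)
The plan is to argue by contrapositive: if $f$ is not of hyperbolic type, then $\inf_{z \in U}\lim_{n\to\infty}\varrho_U(f^{n+1}(z), f^n(z)) = 0$. The sequence $\varrho_U(f^{n+1}(z), f^n(z))$ is non-increasing by the Schwarz--Pick Lemma applied to $f$, so the relevant limit exists. In the parabolic~I case, Theorem~A gives $\varrho_U(f^{n+1}(z), f^n(z)) \to 0$ for every $z \in U$, so the infimum is zero. The substantive case is parabolic~II, which I treat next.

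Assume $f$ is of parabolic~II type, and let $g: \HH \to \HH$ be a lift of $f$ by a universal covering $\pi: \HH \to U$. Cowen's Theorem applied to $g$ supplies an absorbing domain $V \subset \HH$ for $g$, a holomorphic $\varphi: \HH \to \HH$ univalent on $V$ with $\varphi(V)$ absorbing in $\HH$ for $T(\omega) = \omega \pm i$, and the conjugacy $\varphi \circ g = T \circ \varphi$. For $w \in V$ and $z = \pi(w)$, repeated application of Schwarz--Pick (to $\pi$, to the inclusion $V \subset \HH$, to the conformal equivalence $\varphi|_V: V \to \varphi(V)$, and to $T: \varphi(V) \to \varphi(V)$, which makes $n \mapsto \varrho_{\varphi(V)}(T^{n+1}(\varphi(w)), T^n(\varphi(w)))$ non-increasing) yields
\[
\lim_{n\to\infty} \varrho_U(f^{n+1}(z), f^n(z)) \le \lim_{n\to\infty} \varrho_{\varphi(V)}(T^{n+1}(\varphi(w)), T^n(\varphi(w))).
\]
It therefore suffices to make the right-hand side smaller than any prescribed $\varepsilon > 0$ by a suitable choice of $w$.

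Since $T$ is a parabolic isometry of $\HH$ fixing $\infty$, $\varrho_\HH(T^{n+1}\omega, T^n\omega) = \varrho_\HH(T\omega, \omega)$ is constant in $n$ and tends to $0$ as $\Re \omega \to \infty$. The main obstacle is that $\varrho_{\varphi(V)} \ge \varrho_\HH$, so smallness of the ambient distance does not a priori transfer to the intrinsic metric on $\varphi(V)$. I would overcome this as follows. Fix $\omega_0 \in \HH$ with $\Re \omega_0$ large enough that $\varrho_\HH(\omega_0, T\omega_0) < \varepsilon$. The absorbing property of $\varphi(V)$ applied to the compact disc $\overline{\DD_\HH(\omega_0, R)}$, together with the fact that $T$ is a hyperbolic isometry, gives $\DD_\HH(T^n \omega_0, R) \subset \varphi(V)$ for all $n$ large enough. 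A standard computation (with explicit rescaling factor $1/\tanh(R/2)$ at the center) shows that the intrinsic hyperbolic metric of a hyperbolic disc of radius $R$ in $\HH$ converges to $\varrho_\HH$ on any compact subset as $R \to \infty$. Combining, for $\omega := T^N \omega_0 \in \varphi(V)$ with $N$ sufficiently large,
\[
\lim_{n\to\infty} \varrho_{\varphi(V)}(T^{n+1} \omega, T^n \omega) = \varrho_\HH(T\omega_0, \omega_0) < \varepsilon,
\]
so setting $w := (\varphi|_V)^{-1}(\omega)$ and $z := \pi(w)$ produces $\lim_n \varrho_U(f^{n+1}(z), f^n(z)) < \varepsilon$, completing the contrapositive.
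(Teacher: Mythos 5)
Your argument is correct and follows essentially the same route as the paper: reduce to the parabolic~II case via Theorem~A, use Cowen's Theorem, and exploit the absorbing property of $\varphi(V)$ together with the fact that $T$ is an isometry to place larger and larger discs centered at $T^n\omega_0$ inside $\varphi(V)$, so that the intrinsic metric there approaches $\varrho_\HH$. The paper uses Euclidean discs $\D(T^n\omega, 1/(2\varepsilon))$ with the explicit density bound $\varrho_D(z)\le 2/\dist(z,\bd D)$ from Lemma~\ref{lem:CG} in place of your hyperbolic discs $\DD_\HH(T^n\omega_0,R)$ and the local convergence of $\varrho_{\DD_\HH(\omega_0,R)}$ to $\varrho_\HH$, but this is only a cosmetic choice of which family of absorbing discs to compare against.
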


\begin{rem}\label{rem:limit}
By the Schwarz--Pick Lemma, the limit $\lim_{n\to\infty}\varrho_U(f^{n+1}(z), f^n(z))$ always exists for $z \in U$ and 
\[
\inf_{z\in U}\lim_{n\to\infty}\varrho_U(f^{n+1}(z), f^n(z)) > 0 \iff 
\inf_{z\in U} \varrho_U(f(z), z) > 0.
\]
\end{rem}

\begin{proof}[Proof of Proposition~\rm\ref{prop:hyper}] In view of Theorem~A, to prove the proposition it is sufficient to show that if $f$ is of parabolic~II type, then 
\[
\inf_{z\in U}\lim_{n\to\infty} \varrho_U(f^{n+1}(z), f^n(z)) =0. 
\]
We proceed as in the proof of the implication (a) $\Rightarrow$ (b) in Theorem~A.
Let $g: \HH \to \HH$ be a lift of $f$ by a universal covering $\pi: \HH \to U$. If $f$ is of parabolic~II type, then we have $\Omega = \HH$, $T(\omega) = \omega \pm i$ in Cowen's Theorem for the map $g$.

Take a small $\varepsilon > 0$ and $\omega \in \HH$ with $\Re(\omega) = 1/\varepsilon$. Let
\[
D_n = \D\left(T^n(\omega), \frac{1}{2\varepsilon}\right)
\]
for $n \ge 0$. Then $\overline{D_n} \subset \HH$, so $D_n \subset \varphi(V)$ for large $n =n(\varepsilon,\omega)$ by the assertion~(b) of Cowen's Theorem. We have $|T^{n+1}(\omega) - T^n(\omega)| = 1$ and 
$\dist(T^n(\omega), \bd D_n) = 1/(2\varepsilon)$, so Lemma~\ref{lem:CG} implies 
\[
\varrho_{D_n}(T^{n+1}(\omega), T^n(\omega)) < 4\varepsilon.
\]
Since $\varphi$ is univalent on $V$, by the Schwarz--Pick Lemma we have
\[
\varrho_U(f^{n+1}(z), f^n(z)) \leq 
\varrho_{\varphi(V)}(T^{n+1}(\omega), T^n(\omega)) \leq
\varrho_{D_n}(T^{n+1}(\omega), T^n(\omega)) < 4\varepsilon
\]
for large $n$, where $z = \pi((\varphi|_V)^{-1}(\omega))$. Hence, for any  arbitrarily small $\varepsilon > 0$, there exists $z\in U$ such that $\varrho_U(f^{n+1}(z), f^n(z))< 4\varepsilon$ for $n$ large enough. It follows that 
\[
\inf_{z\in U}\lim_{n\to\infty} \varrho_U(f^{n+1}(z), f^n(z)) =0. 
\]
\end{proof}

\begin{rem}\label{rem:konig} If the images under $f^n$ of any closed curve in $U$ are eventually contractible in $U$ (e.g.~when $U$ is a Baker domain for a meromorphic map with finitely many poles), then by Theorem~\ref{thm:konig}, one obtains a characterization of all three types of $f$ in terms of its dynamical behaviour. In the general case, apart of the characterization of parabolic~I type in Theorem~A, Proposition~\ref{prop:hyper} gives a sufficient condition for $f$ to be of hyperbolic type. It would be interesting to check whether the condition is necessary, and to obtain a characterization of all three types of $f$ in terms of its dynamical behaviour in the general case.
\end{rem}

\section{Simply connected absorbing domains: Proof of Theorem B}\label{sec:absorb}

With the goal of proving Theorem B,  we present a condition equivalent to the existence of a simply connected absorbing domain $W$ in $U$ for $f$.

\begin{prop} \label{prop:simply} Let $U$ be a hyperbolic domain in $\C$ and let $f: U \to U$ be a holomorphic map, such that $f^n \to \infty$ as $n \to
\infty$. Then the following statements are equivalent:
\begin{itemize}
\item[$(a)$] There exists a simply connected absorbing domain $W$ in $U$ for $f$.
\item[$(b)$] There exists a nice simply connected absorbing domain $W$ in $U$ for $f$.
\item[$(c)$] For every closed curve $\gamma \subset U$ there exists $n > 0$ such that $f^n(\gamma)$ is contractible in $U$.
\end{itemize}
\end{prop}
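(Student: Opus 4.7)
The plan is to establish the cycle of implications \mbox{(b) $\Rightarrow$ (a) $\Rightarrow$ (c) $\Rightarrow$ (b)}, in which two of the three arrows are essentially bookkeeping and only one genuinely invokes the machinery already developed.

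The implication \mbox{(b) $\Rightarrow$ (a)} is immediate from the definitions, since a nice simply connected absorbing domain is, in particular, a simply connected absorbing domain. For \mbox{(a) $\Rightarrow$ (c)}, given a simply connected absorbing domain $W$ in $U$ and any closed curve $\gamma \subset U$, I would note that the image of $\gamma$ is a compact subset of $U$; the defining property of an absorbing domain, applied to $K = \gamma$, therefore yields some $n > 0$ with $f^n(\gamma) \subset W$. Because $W$ is simply connected, $f^n(\gamma)$ is null-homotopic inside $W$, hence null-homotopic in $U$ as well.

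The implication \mbox{(c) $\Rightarrow$ (b)} is where the earlier results do all the work. The assumption in (c) is precisely the eventual-contractibility hypothesis of Theorem~\ref{thm:konig}; that theorem therefore supplies a simply connected absorbing domain $W \subset U$ for $f$ (together with the semiconjugacy data $\psi$, $T$, $\Omega$, which we do not need here). By Remark~\ref{rem:nice}, this $W$ can be chosen to be nice, which closes the cycle.

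No step presents a serious obstacle, as the deep content is packaged into Theorem~\ref{thm:konig} and Remark~\ref{rem:nice}; the only point requiring care is to identify the image of a closed curve with a compact set in the definition of absorbing domain, which is immediate. One should also note that although (a) was stated for an \emph{arbitrary} simply connected absorbing domain, no additional assumptions (such as $\overline{W} \subset U$) are used in the argument above, so the three conditions are genuinely equivalent as stated.
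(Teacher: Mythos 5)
Your proof is correct and matches the paper's argument exactly: the same three implications \mbox{(b) $\Rightarrow$ (a)} (trivial), \mbox{(a) $\Rightarrow$ (c)} (absorbing property applied to the compact curve), and \mbox{(c) $\Rightarrow$ (b)} (Theorem~\ref{thm:konig} plus Remark~\ref{rem:nice}). The only micro-quibble is that the definition of absorbing domain gives $n \ge 0$ while (c) asks for $n > 0$; this is harmless because $f(W) \subset W$ lets you increase $n$.
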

\begin{proof}
The implication \mbox{(a) $\Rightarrow$ (c)} follows by the absorbing property of $W$, the implication \mbox{(c) $\Rightarrow$ (b)} is due to Theorem~\ref{thm:konig} and Remark~\ref{rem:nice}, and the implication \mbox{(b) $\Rightarrow$ (a)} is trivial.
\end{proof}

\begin{defn*}
For a compact set $X \subset \C$ we denote by $\ext(X)$
the connected component of $\clC \setminus X$ containing infinity.
We set $K(X) = \clC \setminus \ext(X)$.
\end{defn*}

Before proving Theorem~B, we show that if $\infty$ is an isolated point of the boundary of $U$, then a simply connected absorbing domain $W$ does not exist.

\begin{prop}\label{prop:isol} Let $U$ be a hyperbolic domain in $\C$ and let $f: U \to U$ be a holomorphic map, such that $f^n \to \infty$ as $n \to
\infty$. If $\infty$ is an isolated point of the boundary of $U$ in $\clC$, then there is no simply connected absorbing domain $W$ in $U$ for $f$.
\end{prop}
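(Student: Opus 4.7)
The plan is to assume, toward a contradiction, that $W \subset U$ is a simply connected absorbing domain, and to exhibit a loop in $U$ whose forward images under $f$ wind nontrivially around a fixed finite point of $\C \setminus U$; this will prevent them from being contractible in $U$, contradicting the absorbing property.

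Since $\infty$ is isolated in $\clC \setminus U$, one has $\{z \in \C : |z| > R\} \subset U$ for some $R > 0$, and because $U$ is hyperbolic, $\C \setminus U$ is nonempty and bounded, so it contains a finite point $\zeta$. For any $R' > R$, the counterclockwise circle $\gamma_0 = \{|z| = R'\}$ lies in $U$ and has winding number $1$ around $\zeta$, so it is not contractible in $\C \setminus \{\zeta\}$. I would then extend $f$ across $\infty$: since $f$ maps $\{|z| > R\}$ into $U$ and hence omits at least two values there, Picard's theorem excludes an essential singularity of $f$ at $\infty$, so $f$ extends to $\infty$ with $f(\infty) \in \clC$. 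The possibility $f(\infty) \in \C$ can be ruled out by continuity, since it would give $f^{n+1}(z) = f(f^n(z)) \to f(\infty) \in \C$, contradicting $f^n \to \infty$. Hence $f$ has a pole of some order $d \geq 1$ at $\infty$, and $f^n$ has a pole of order $d^n$ there.

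Next I would compute the winding number of $f^n(\gamma_0)$ around $\zeta$. By homotopy invariance the circles $\{|z| = R'\}$ and $\{|z| = R''\}$ are homotopic in $U$ through the annulus between them, so their $f^n$-images are homotopic in $U$, and we may replace $\gamma_0$ by $\{|z| = R''\}$ for $R''$ as large as needed. Choosing $R''$ large enough that $|f^n(z)| > |\zeta|$ on $\{|z| \geq R''\}$ (possible since $f^n$ has a pole at $\infty$), the argument principle applied to $f^n - \zeta$ on the disk-like region $\{|z| > R''\} \cup \{\infty\} \subset \clC$ yields that the winding number equals $d^n \geq 1$. On the other hand, the absorbing property gives $f^n(\gamma_0) \subset W$ for some $n$, and since $W$ is simply connected, $f^n(\gamma_0)$ is contractible in $W \subset U \subset \C \setminus \{\zeta\}$, so its winding number around $\zeta$ must vanish---the desired contradiction. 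The main subtlety lies in ruling out the removable-singularity case; once that is settled the winding-number computation is routine via the homotopy-invariance reduction to large $R''$.
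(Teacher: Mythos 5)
Your proof is correct and takes a genuinely different route from the paper's. Both proofs begin the same way: extend $f$ holomorphically across $\infty$ via Picard's theorem and observe that $f(\infty)=\infty$. From there you diverge. The paper goes on to show that $\infty$ is an \emph{attracting} fixed point of the extended map (by noting $\overline{f^n(V)}\subset V$ for $V$ a disk-neighbourhood of $\infty$ and large $n$, using openness of $f^n$ and uniform convergence on $\bd V$), then uses the Koenigs/B\"ottcher local normal form $\psi$ at $\infty$ to build a Jordan curve $\gamma=\psi^{-1}(\bd\D(0,r))$ with $K(f^n(\gamma))\supset K(\gamma)\supset \C\setminus U$ for all $n$, so $f^n(\gamma)$ is never contractible in $U$; it then invokes Proposition~\ref{prop:simply}. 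You instead work with winding numbers via the argument principle: $f$ has a pole of some order $d\ge 1$ at $\infty$, hence $f^n$ has a pole of order $d^n$, and applying the argument principle to $f^n-\zeta$ on $\{|z|>R''\}\cup\{\infty\}$ (with $R''$ chosen so that $|f^n|>|\zeta|$ there) shows the winding number of $f^n(\{|z|=R''\})$ around $\zeta\in\C\setminus U$ is $d^n\ge 1$; a free homotopy through the annulus $\{R'\le|z|\le R''\}\subset U$ transfers this to $f^n(\gamma_0)$, contradicting contractibility inside a simply connected absorbing $W$. Your route is somewhat shorter and bypasses the attracting-fixed-point and local-normal-form steps entirely (only the pole order matters, not the multiplier), whereas the paper's route establishes the geometrically informative fact that $\infty$ becomes an attracting fixed point of the extension and gives a curve whose polynomial hull grows monotonically. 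Both are valid.
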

\begin{proof}
By assumption, $U$ is a punctured neighbourhood of $\infty$ in $\clC$. Since $U$ is hyperbolic and $f(U) \subset U$, the set $f(U)$ omits at least three points in $\clC$, so by the Picard Theorem, the map $f$ extends holomorphically to $U \cup \{\infty\}$. Let $V = \{z\in\C: |z| > R\} \cup \{\infty\}$ for a large $R > 0$. Since $f^n \to \infty$ uniformly on the boundary of $V$ in $\clC$, by the openness of $f^n$, the closure of $f^n(V)$ in $\clC$ is contained in $V$ for every sufficiently large $n$. This easily implies that $\infty$ is an attracting fixed point for the extended map $f$. Hence, $f$ in a neighbourhood of $\infty$ is conformally conjugate by a map $\psi$ to $z \mapsto \lambda z$ for some $\lambda \in \C$, $0 < |\lambda| < 1$ or to $z \mapsto z^k$ for some integer $k \geq 2$ in a neighbourhood of $0$. Let $\gamma = \psi^{-1}(\bd\D(0, r))$ for a small $r > 0$. Then for every $n > 0$ we have  $f^n(\gamma) \subset \ext(\gamma)$ and $K(f^n(\gamma)) \supset K(\gamma)$, so $f^n(\gamma)$ is 
not contractible in $U$ and we can use Proposition~\ref{prop:simply} to end the proof.
\end{proof}

Now we prove the main result of this section.

\begin{thmB}Let $U$ be a hyperbolic domain in $\C$, let $f: U \to U$ be a holomorphic map, such that $f^n(z) \to \infty$ as $n \to
\infty$ for $z \in U$ and $\infty$ is not an isolated point of the boundary of $U$ in $\clC$. If $f$ is of parabolic~I type, 
then there exists a nice simply connected absorbing domain $W$ in $U$ for $f$.
\end{thmB}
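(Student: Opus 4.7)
The plan is to apply Proposition~\ref{prop:simply} by establishing its condition~(c): every closed loop $\gamma \subset U$ is eventually contractible in $U$ under iteration of $f$. Once this is done, Proposition~\ref{prop:simply} immediately delivers a nice simply connected absorbing domain.

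To show eventual contractibility, I lift to the universal cover. Let $\pi : \HH \to U$ be a universal covering, $g : \HH \to \HH$ a lift of $f$, and let $\tilde\gamma \subset \HH$ be a lift of $\gamma$ starting at a point $w_0$ and ending at $\tau w_0$ for some deck transformation $\tau$ of $\pi$. Then $g^n \tilde\gamma$ lifts $f^n \gamma$, starting at $g^n w_0$ and ending at $g^n \tau w_0$, and since $\HH$ is simply connected, $f^n \gamma$ is contractible in $U$ precisely when $g^n w_0 = g^n \tau w_0$. Because $f$ is of parabolic~I type, so is $g$, and Cowen's Theorem furnishes a simply connected absorbing domain $V \subset \HH$ for $g$, a holomorphic semi-conjugacy $\varphi : \HH \to \C$ satisfying $\varphi \circ g = \varphi + 1$, univalent on $V$. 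For $n$ large, both $g^n w_0$ and $g^n \tau w_0$ lie in $V$ (by the absorbing property applied to any compact set containing $w_0$ and $\tau w_0$), and then the desired equality $g^n w_0 = g^n \tau w_0$ reduces, via univalence of $\varphi|_V$, to $\varphi(w_0) = \varphi(\tau w_0)$.

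Thus the crux is to show that $\varphi$ is invariant under the deck group of $\pi$. I invoke Corollary~A': there exists $\Psi : U \to \C$ with $\Psi \circ f = \Psi + 1$. The composition $\Psi \circ \pi : \HH \to \C$ also semi-conjugates $g$ to $\omega \mapsto \omega + 1$ on $\C$, as does $\varphi$. By the essential uniqueness (up to additive constants) of this semi-conjugacy in the parabolic~I case of Cowen's Theorem, $\Psi \circ \pi = \varphi + c$ for some constant $c \in \C$; after replacing $\Psi$ by $\Psi - c$ we may take $\Psi \circ \pi = \varphi$. Hence $\varphi \circ \tau = \Psi \circ \pi \circ \tau = \Psi \circ \pi = \varphi$ for every deck transformation $\tau$, so in particular $\varphi(\tau w_0) = \varphi(w_0)$, as needed.

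The principal obstacle is applying Corollary~A', which relies on Theorem~\ref{thm:bonfert1.4} and so requires $f$ to have no isolated boundary fixed points. The hypothesis that $\infty$ is not an isolated point of $\bd U$ in $\clC$ directly excludes $\infty$; any remaining isolated boundary fixed point $\zeta \in \C$ would necessarily be a repelling fixed point of the extension of $f$, since $f^n(z) \to \infty$ on $U$ rules out attracting, parabolic, Siegel, and Cremer behaviour at $\zeta$. I expect this residual case can either be ruled out under the other hypotheses or accommodated by a mild extension of Theorem~\ref{thm:bonfert1.4}, so that Corollary~A' applies throughout and the argument above goes through.
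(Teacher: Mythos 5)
Your strategy is genuinely different from the paper's. The paper proves eventual contractibility of loops directly in $U$: it first rules out isolated boundary fixed points by a Maximum Principle argument, then applies Theorem~\ref{thm:rippon} together with Theorem~A~(c) to obtain the estimate $|f^{n+1}(z)-f^n(z)| < \tfrac12\dist(f^n(z),\bd U)$ along $\gamma$, and finally runs an elementary winding-number argument around an arbitrary $v\in\C\setminus U$, showing that $K(f^n(\gamma))\setminus U$ stabilizes; since $f^n(\gamma)\to\infty$, this forces $\C\setminus U$ to be compact, contradicting the hypothesis on $\infty$. You instead pass to the universal cover and try to show that Cowen's map $\varphi$ is invariant under the deck group, which would indeed give contractibility by the neat argument you sketch in the first two paragraphs.

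The difficulty is in your third paragraph, and it is a real gap. The ``essential uniqueness up to additive constants'' of the semi-conjugacy to $\omega\mapsto\omega+1$ does not hold for arbitrary holomorphic solutions of the functional equation: if $h:\C\to\C$ is any entire $1$-periodic function, then $\varphi+h\circ\varphi$ satisfies $(\varphi+h\circ\varphi)\circ g = (\varphi+h\circ\varphi)+1$ as well, so the solution space is much larger than $\{\varphi+c\}$. The uniqueness statement implicit in Cowen's Theorem (``$\varphi$, $T$ depend only on $g$'') is a uniqueness among maps satisfying conditions $(a)$--$(d)$ of that theorem --- in particular, univalence on an absorbing domain --- and nothing in Theorem~\ref{thm:bonfert1.4} or Corollary~A' asserts that the Bonfert map $\Psi$ is univalent or that $\Psi\circ\pi$ satisfies Cowen's conditions. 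Without verifying this, you cannot conclude $\Psi\circ\pi=\varphi+c$, and the whole deck-invariance argument collapses. (Note also that in this setting $\Psi\circ\pi=\varphi+c$ is in fact \emph{equivalent} to $\varphi$ being deck-invariant, which is what you set out to prove, so there is a real danger of circularity if one tries to repair this via $\pi$.)

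A secondary, smaller gap is the one you flag yourself at the end: you must rule out isolated boundary fixed points $\zeta\in\C$ before applying Theorem~\ref{thm:bonfert1.4}. Your suggestion that such $\zeta$ must be repelling and hence can be ``accommodated'' is not an argument. The paper handles this cleanly: if $\zeta_0$ were such a point, take a small Jordan curve $\gamma_0$ around $\zeta_0$ with Jordan domain $D$ satisfying $D\setminus\{\zeta_0\}\subset U$; since $f^n\to\infty$ uniformly on $\gamma_0$ while $\zeta_0\in f^n(D)$, the Maximum Principle gives $\bigcup_n f^n(D)=\C$, forcing $U=\C\setminus\{\zeta_0\}$, contradicting hyperbolicity. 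You would need this same step.
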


\begin{proof}
Note first that $f$ has no fixed points in $U$. Moreover, we will show that $f$ has no isolated boundary fixed points. Indeed, suppose that $\zeta_0$ is an isolated point of the boundary of $U$ in $\clC$ and $f$ extends holomorphically to $U \cup \{\zeta_0\}$ with $f(\zeta_0) = \zeta_0$. By assumption, $\zeta_0 \neq \infty$. Take a Jordan curve $\gamma_0 \subset U$ surrounding $\zeta_0$ in a small neighbourhood of $\zeta_0$, such that $D \setminus \{\zeta_0\} \subset U$, where $D$ is the component of $\C\setminus \gamma_0$ containing $\zeta_0$. Since $f^n \to \infty$ uniformly on $\gamma_0$ and $\zeta_0 = f^n(\zeta_0) \in f^n(D)$, by the Maximum Principle we obtain 
\[
U \supset \bigcup_{n = 0}^\infty f^n(D \setminus \{\zeta_0\}) \supset \bigcup_{n = 0}^\infty f^n(D) \setminus \{\zeta_0\} = \C \setminus \{\zeta_0\},
\]
so in fact $U = \C \setminus \{\zeta_0\}$, which is impossible since $U$ is hyperbolic.
Hence, $f$ has no isolated boundary fixed points.

Take a closed curve $\gamma \subset U$. We will check that there exists $n > 0$ such that $f^n(\gamma)$ is contractible in $U$.  By Proposition~\ref{prop:simply}, this will prove the existence of a nice simply connected absorbing domain $W$ in $U$ for $f$. Suppose this is not true. By Theorem~\ref{thm:rippon} for $K = \gamma \cup f(\gamma)$, there exists a constant $C > 0$ such that for every $z \in \gamma$ and every $n \geq 0$,
\[
\frac{|f^{n+1}(z) - f^n(z)|}{\dist(f^n(z), \bd U)} \leq C \varrho_U(f^{n+1}(z), f^n(z)),
\]
so by Theorem~A~(c), there exists $n_0 \geq 0$ such that for every $z \in \gamma$ and every $n \geq n_0$,
\begin{equation}\label{eq:dist}
|f^{n+1}(z) - f^n(z)| < \frac{1}{2}\dist(f^n(z), \bd U).
\end{equation}

Fix an arbitrary point $v \in \C \setminus U$. By \eqref{eq:dist}, for every $z \in \gamma$ and every $n \geq n_0$ we have 
\[
|f^{n+1}(z) - f^n(z)| < \frac{1}{2} |f^n(z) - v|.
\]
This implies that $f^{n+1}(z)-v$ lies in a disc D of center $f^n(z)-v$ and radius $\frac12 |f^n(z)-v|$. Clearly $0\notin D$ and a simple calculation shows that $D$ is included in a sector of angle $\pi/6$. Hence, there exists a branch of the argument function in $D$, and in a small neighborhood of $z$ we have 
\[
|\Arg(f^{n+1}(z) - v) - \Arg(f^n(z) - v)| < \frac{\pi}{6}.
\]
Taking an analytic continuation of this branch while $z$ goes around $\gamma$, we see that the winding number of $f^n(\gamma)$ around $v$ is the same as the winding number of $f^{n+1}(\gamma)$ around $v$. In particular, $v$ is in a bounded component of $\C \setminus f^n(\gamma)$ if and only if it is in a bounded component of $\C \setminus f^{n+1}(\gamma)$. By induction, this implies
\begin{equation}
\label{eq:v}
v \in K(f^n(\gamma)) \quad \text{if and only if} \quad v \in K(f^{n'}(\gamma))
\end{equation}
for every $n' \geq n$.

Take $n \geq n_0$. By assumption, $f^n(\gamma)$ is not contractible in $U$, so there exists a point $v_0 \in K(f^n(\gamma)) \setminus U$. Since $f^k \to \infty$ as $k \to \infty$ uniformly on $\gamma$, there exists $n'> n$ such that $f^{n'}(\gamma) \cap K(f^n(\gamma)) = \emptyset$. We cannot have $K(f^{n'}(\gamma)) \subset \ext(f^n(\gamma))$, because then we would have $v_0 \notin K(f^{n'}(\gamma))$, which contradicts \eqref{eq:v} for $v = v_0$. Hence, $K(f^n(\gamma)) \subset K(f^{n'}(\gamma))$, so
\[
K(f^n(\gamma)) \setminus U \subset K(f^{n'}(\gamma)) \setminus U.
\]
On the other hand, 
\[
K(f^n(\gamma))\setminus U \supset K(f^{n'}(\gamma)) \setminus U,
\]
because otherwise there would exists a point $v_1 \in \C \setminus U$, such that 
$v_1 \in K(f^{n'}(\gamma)) \setminus K(f^n(\gamma))$, which contradicts \eqref{eq:v} for $v = v_1$. 

We conclude that for every $n \geq n_0$ there exists $n' > n$, such that
\[
K(f^n(\gamma)) \setminus U = K(f^{n'}(\gamma)) \setminus U.
\]
In this way we can construct an increasing sequence $n_j$, $j \geq 0$, such that 
\begin{equation}\label{eq:K}
K(f^{n_j}(\gamma))\subset K(f^{n_{j+1}}(\gamma)) \qquad \text{and} \qquad
K(f^{n_j}(\gamma))\setminus U = K(f^{n_0}(\gamma)) \setminus U
\end{equation}
for every $j$. Since $f^{n_j} \to \infty$ as $j \to \infty$ uniformly on $\gamma$, \eqref{eq:K} implies that  
\[
\bigcup_{j = 0}^\infty K(f^{n_j}(\gamma)) = \C
\]
and $\C \setminus U =\bigcup_{j=0}^\infty K(f^{n_j}(\gamma))\setminus U =  K(f^{n_0}(\gamma)) \setminus U$ is a compact subset of $\C$. Hence, $U$ contains a punctured neighbourhood of $\infty$ in $\clC$, so $\infty$ is an isolated point of the boundary of $U$ in $\clC$, which is a contradiction. 
\end{proof}

\section{Examples}\label{sec:ex}

Let
\begin{equation}\label{eq:f}
f(z) = z + 1 + \sum_{p \in \PP} \frac{a_p}{(z-p)^2}, \qquad a_p \in \C \setminus \{0\},
\end{equation}
where $\PP \subset \C$ has one of the three following forms:
\begin{itemize}
\item[(i)] $\PP = i\Z = \{im: m \in \Z\}$,
\item[(ii)] $\PP = \Z$ or $\Z_+$,
\item[(iii)] $\PP = \Z + i\Z= \{j+im: j,m \in \Z\}$.
\end{itemize}
It is obvious that for sufficiently small $|a_p|$, the map \eqref{eq:f} is transcendental meromorphic, with the set of poles equal to $\PP$.

Let 
\[
\tilde \PP = \bigcup_{j=0}^\infty (\PP - j) = \{p-j: p\in \PP, j =0, 1, \ldots\}.
\]

(In the case $\PP = \Z$ or $\PP = \Z + i\Z$ we have $\tilde \PP = \PP$.) The assertions of Theorem~C and other results mentioned in Section~\ref{sec:intro} are gathered in the following theorem.

\begin{thm}\label{thm:ex} For every sufficiently small $\delta > 0$, there exists a map $f$ of the form \eqref{eq:f} with an invariant Baker domain $U$, such that $U \supset \C \setminus \bigcup_{p\in\tilde\PP} \D(p, \delta)$. Moreover,
\begin{itemize}
\item in case~$(i)$ we have $\varrho_U(f^{n+1}(z), f^n(z)) \to 0$ as $n \to \infty$ for $z \in U$ and $f|_U$ is of parabolic~I type, so there exists a simply connected absorbing domain $W$ in $U$ for $f$;
\item
in case~$(ii)$ we have $\varrho_U(f^{n+1}(z), f^n(z)) \not\to 0$ as $n \to \infty$ for $z \in U$, 
\[
\inf_{z\in U} \lim_{n\to\infty} \varrho_U(f^{n+1}(z), f^n(z))=0,
\]
and there does not exist a simply connected absorbing domain $W$ in $U$ for $f$;
\item in case~$(iii)$ we have 
\[
\inf_{z\in U}\lim_{n\to\infty}\varrho_U(f^{n+1}(z), f^n(z))>0,
\]
$f|_U$ is of hyperbolic type, and there does not exist a simply connected absorbing domain $W$ in $U$ for $f$.
\end{itemize}
Moreover, in all three cases, $\{\infty\}$ is a singleton component of $\clC\setminus U$, in particular it is a singleton component of $J(f)$.
\end{thm}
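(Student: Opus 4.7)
\emph{Setup, Baker domain, and singleton component.} The overall strategy is to realise $f$ as a small perturbation of the translation $z \mapsto z+1$ on the open set $U_\delta := \C \setminus \bigcup_{p\in\tilde\PP}\overline{\D(p,\delta)}$, and then to read off the three types from the three different shapes of $\PP$. I would choose the $a_p \in \C \setminus \{0\}$ decaying fast enough in $|p|$ that the series in \eqref{eq:f} defines a transcendental meromorphic map with pole set exactly $\PP$ and $\sup_{z \in U_\delta}|f(z)-(z+1)|$ is arbitrarily small. The combinatorial identity $\tilde\PP - 1 \subset \tilde\PP$ yields $U_\delta + 1 \subset U_\delta$, and once the perturbation is small compared to $\delta$ this upgrades to $f(U_{2\delta}) \subset U_\delta$, so $f^n(z)\to\infty$ on $U_{2\delta}$ and $U_{2\delta}$ (hence $U_\delta$ after slightly shrinking $\delta$) lies in an invariant Baker domain $U$. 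For the singleton-component assertion, $\clC \setminus U \subset \bigcup_{p\in\tilde\PP}\overline{\D(p,\delta)} \cup \{\infty\}$; since the closed discs are pairwise disjoint and bounded in $\C$, for every $R>0$ the finitely many discs with $|p|\le R$ can be separated from the tail together with $\{\infty\}$ by a clopen partition of $\clC \setminus U$, which forces the connected component of $\infty$ in $\clC\setminus U$ (and hence in $J(f)\supset \bd U$) to be $\{\infty\}$.

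\emph{Case $(i)$: parabolic~I.} Here $\tilde\PP \subset \{\Re z \le 0\}$, so $\dist(f^n(z), \bd U) \ge \Re f^n(z) - \delta$, which tends to $+\infty$ since $f(z) = z+1+o(1)$ drives $\Re f^n(z) \to \infty$. The first inequality of Lemma~\ref{lem:CG} then gives $\varrho_U(f^n(z)) \to 0$, and integrating this density along the Euclidean segment from $f^n(z)$ to $f^{n+1}(z)$ (of length tending to $1$) yields $\varrho_U(f^{n+1}(z), f^n(z)) \to 0$. Theorem~A identifies parabolic~I type and Theorem~B then supplies a nice simply connected absorbing domain.

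\emph{Cases $(ii)$ and $(iii)$.} In case $(ii)$ I would take $a_p \in \R$, so that $f$ preserves $\R \setminus \Z$; then the orbit of $z_0 = 1/2$ stays on the real axis at distance $\le 1/2$ from $\bd U$ while $|f^{n+1}(z_0) - f^n(z_0)| \to 1$. A lower bound for $\varrho_U(f^{n+1}(z_0), f^n(z_0))$, strong enough to prevent convergence to $0$, will be obtained by comparing $\varrho_U$ from below with the hyperbolic metric of the model domain $\C \setminus \Z$ (itself computable through the thrice-punctured-sphere cover of the cylinder $\C/\Z \setminus \{\hat 0\}$), combined with a drift analysis of the orbit on that cylinder. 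By Theorem~A this rules out parabolic~I type. The infimum statement is immediate by taking $z_N = iN$: since $\Im f^n(z_N) \approx N$, the upper bound in Lemma~\ref{lem:CG} gives $\lim_n \varrho_U(f^{n+1}(z_N), f^n(z_N)) = O(1/N) \to 0$. To exclude simply connected absorbing domains I appeal to Proposition~\ref{prop:simply} and the $\Z$-periodicity $f(z+1) = f(z) + 1$: passing $f$ to the cylinder $\C/\Z$, a small loop around the (unique) pole class has a homotopy class preserved by every iterate and therefore cannot become contractible in $U$. Case $(iii)$ is analogous using the double periodicity of $\PP = \Z + i\Z$: $f$ descends to a meromorphic self-map of the torus, compactness of the torus together with Remark~\ref{rem:limit} yields $\inf_{z \in U} \varrho_U(f(z), z) > 0$, so Proposition~\ref{prop:hyper} gives hyperbolic type, and a lattice winding-number argument again rules out simply connected absorbing domains.

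\emph{Main obstacle.} The delicate step is the lower bound on $\varrho_U$ in cases $(ii)$ and $(iii)$, since Lemma~\ref{lem:CG} supplies only an upper bound away from $\bd U$: this will be handled by producing explicit hyperbolic model domains (a plane minus a lattice, a cylinder or torus minus a point) that sit inside $U$ and whose metrics can be compared with $\varrho_U$ via the Schwarz--Pick Lemma. The accompanying topological problem, the persistent non-contractibility of loops under all iterates, will be settled by a winding-number analysis parallel to the one used in the proof of Theorem~B, but with the winding \emph{preserved} rather than destroyed by the periodicity of $f$.
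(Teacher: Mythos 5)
Your overall architecture (realizing $f$ as a small perturbation of $z \mapsto z+1$ on $\C$ minus discs around $\tilde\PP$, then invoking Theorem~A, Proposition~\ref{prop:hyper}, and a winding-number argument via Proposition~\ref{prop:simply}) matches the paper, but there are three concrete gaps, the first of which is the paper's real technical content.

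\emph{The iteration does not close.} You argue ``once the perturbation is small compared to $\delta$ this upgrades to $f(U_{2\delta}) \subset U_\delta$, so $f^n(z) \to \infty$ on $U_{2\delta}$.'' But $f(U_{2\delta}) \subset U_\delta$ alone does not let you iterate: $f(z)$ may lie in $U_\delta \setminus U_{2\delta}$, so the same inclusion cannot be reapplied, and the cumulative perturbation $\sum_k |e(f^k(z))|$ could drive the orbit into a disc $\D(p,\delta)$. The paper's Lemmas~\ref{lem:1} and~\ref{lem:2} are precisely the tool you are missing: choosing $|a_p|$ small and exploiting the quadratic decay of $e$, one shows by induction that $|f^n(z) - z - n| < \varepsilon/2$ for \emph{all} $n$ whenever $z \in \tilde V$, because the summed error over the entire orbit is controlled uniformly. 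Without a statement of this type, $f^n(z) \to \infty$ and $\tilde V \subset U$ are not established.

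\emph{Case $(ii)$, lower bound on $\varrho_U$.} You propose to bound $\varrho_U(f^{n+1}(z_0), f^n(z_0))$ from below via the model $\C \setminus \Z$ and a ``drift analysis of the orbit on the cylinder,'' but you do not carry this out. The paper uses a much more direct device: Theorem~\ref{thm:rippon} (Rippon) converts a lower bound on $|f^{n+1}(z)-f^n(z)|/\dist(f^n(z),\bd U)$ into a lower bound on the hyperbolic step, uniformly in $n$ for a fixed $z$. Applied to $z = ik$, together with Lemma~\ref{lem:2}, this gives $\varrho_U(f^{n+1}(ik), f^n(ik)) \ge 1/(4Ck)$ for all $n$; Theorem~A then rules out parabolic~I. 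Your route might be salvageable, but Rippon's theorem is already in the paper and avoids any explicit computation of hyperbolic metrics.

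\emph{Case $(iii)$, the compactness argument has a real hole.} Even with $a_p$ chosen periodic so that $\varrho_U(f(z),z)$ descends to a function on $U/\Lambda$, the quotient $U/\Lambda$ is a torus \emph{minus a neighbourhood of the pole class} and is therefore not compact; continuity and positivity give nothing about the infimum near the deleted set. One has to control the behaviour near $\bd U$ directly. The paper does this by noting that every $z \in U$ lies within $\sqrt 2/2$ of some $p \in \PP$, that $|f(z)-z| > 1/2$ by Lemma~\ref{lem:2}, and that $p, p+1 \notin U$, so the Schwarz--Pick Lemma gives $\varrho_U(f(z),z) \ge \varrho_{\C\setminus\{0,1\}}(w,w')$ with $w,w'$ in a fixed bounded region at Euclidean distance $>1/2$; a length estimate along the geodesic then gives a uniform positive lower bound. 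Your sketch does not substitute for this step.

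Two smaller remarks: your winding-number argument via the quotient cylinder/torus requires the $a_p$ to be chosen lattice-periodic (a legitimate choice, since the theorem only asserts existence), whereas the paper's direct argument (comparing $f^n(\bd Q_0)$ with $\bd Q_0 + n$ via Lemma~\ref{lem:2}) works for arbitrary small $a_p$ and also covers $\PP = \Z_+$, where no translation symmetry is available. Your singleton-component argument (clopen separation) is a valid alternative to the paper's device of exhibiting the square boundaries $\bd Q_k \subset U$.
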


Let
\[
e(z) = \sum_{p \in \PP} \frac{a_p}{(z-p)^2} = f(z)-(z+1) 
\]
and notice for further purposes that for all $n\geq 1$,
\[ 
f^n(z) - (z+n)=\sum_{k=0}^{n-1} e(f^k(z)).
\]

To prove Theorem~\ref{thm:ex}, we will use the two following lemmas.

\begin{lem}\label{lem:1}
For every sufficiently small $\varepsilon > 0$, there exist non-zero complex numbers $a_p$, $p \in \PP$, such that for every $n \geq 0$ and $z_0, \ldots , z_n \in \C$, if $\dist(z_0, \tilde \PP) \geq \varepsilon$ and $|z_k - z_0 - k| < \varepsilon/2$ for $k = 0, \ldots, n$, then 
\[
\sum_{k=0}^n |e(z_k)| < \frac{\varepsilon}{2}.
\]
\end{lem}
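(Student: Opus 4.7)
The plan is to bound $\sum_{k=0}^n|e(z_k)| \leq \sum_{p\in\PP}|a_p|\sum_{k=0}^n 1/|z_k-p|^2$ by controlling the inner $k$-sum uniformly in $p \in \PP$, so that a summability condition of the form $\sum_p|a_p| \leq c\varepsilon^3$ for a suitable absolute constant $c$ will suffice. This constraint is compatible with every $a_p$ being nonzero: one fixes a summable sequence $(b_p)_{p\in\PP}$ of positive numbers (for example $b_p = 2^{-|p|}$, which sums to an absolute constant in each of the cases (i)--(iii)) and takes $a_p = \lambda b_p$ with $\lambda > 0$ sufficiently small.

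First I transfer the distance hypothesis. For any $p \in \PP$ and any integer $k \geq 0$, we have $p - k \in \tilde\PP$, so $\dist(z_0,\tilde\PP) \geq \varepsilon$ forces $|z_0 + k - p| \geq \varepsilon$. Combined with $|z_k - (z_0 + k)| < \varepsilon/2$, the triangle inequality yields $|z_k - p| > \varepsilon/2$ (in particular $z_k$ avoids every pole, so that $e(z_k)$ is well defined) and, more usefully,
\[
|z_k - p| > \frac{|z_0 + k - p|}{2}, \qquad \text{hence} \qquad \frac{1}{|z_k - p|^2} < \frac{4}{|z_0 + k - p|^2},
\]
since subtracting $\varepsilon/2$ from any quantity $\geq \varepsilon$ removes at most half of it.

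Next I estimate $\sum_{k=0}^\infty 1/|z_0 + k - p|^2$ uniformly in $p \in \PP$. Writing $z_0 = x_0 + iy_0$ and $p = p_R + ip_I$ with $p_R, p_I \in \Z$ (with only certain combinations allowed depending on whether we are in case~(i), (ii) or (iii)), one has $|z_0 + k - p|^2 = (x_0 + k - p_R)^2 + (y_0 - p_I)^2$. I split on the horizontal distance $|x_0+k-p_R|$: at most three integers $k \geq 0$ satisfy $|x_0+k-p_R| \leq 1$, and each contributes at most $1/\varepsilon^2$ by the previous step; for the remaining $k$, the denominator is bounded below by $(x_0+k-p_R)^2 > 1$, and the sum of $1/(x_0+k-p_R)^2$ over such $k$ is bounded by $2\sum_{n\geq 1} 1/n^2 = \pi^2/3$, since the values $x_0+k-p_R$ form a translate of $\Z$ restricted to $|\cdot|>1$. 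Altogether $\sum_{k=0}^\infty 1/|z_0+k-p|^2 \leq 3/\varepsilon^2 + \pi^2/3 \leq C/\varepsilon^2$ for an absolute constant $C$ and all sufficiently small $\varepsilon$, crucially independent of $p \in \PP$.

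Combining the two steps, $\sum_{k=0}^n |e(z_k)| \leq (4C/\varepsilon^2)\sum_{p\in\PP}|a_p|$, so choosing $\lambda$ small enough that $\sum_p|a_p| < \varepsilon^3/(8C)$ gives $\sum_{k=0}^n|e(z_k)| < \varepsilon/2$, completing the proof. The main (mild) obstacle is the uniform-in-$p$ control of the inner sum: a naive application of the integral bound $\int dt/(t^2+A^2) = \pi/|A|$ with $A = y_0 - p_I$ would degenerate when the trajectory $\{z_0+k\}_{k\geq 0}$ runs close to a horizontal row of $\PP$, but discarding the vertical component $(y_0-p_I)^2$ once the horizontal distance exceeds $1$ sidesteps this issue and yields a bound depending only on $\varepsilon$.
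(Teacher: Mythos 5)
Your proof is correct, and it reaches the key uniform bound $\sum_{k\geq 0} 1/|z_k - p|^2 = O(\varepsilon^{-2})$ (uniform in $p \in \PP$) by a decomposition that differs in detail from the paper's, even though the overall strategy --- make $\sum_{p\in\PP}|a_p|$ small enough to beat a $p$-uniform estimate of the inner $k$-sum --- is the same. The paper treats case~(ii) and case~(iii) (with (i) as a subcase of (iii)) separately, and in each case proves a pointwise lower bound of the form $|z_k - p| \geq c\,\varepsilon\cdot\bigl(|k+k_0-j| + |m-m_0|\bigr)$ with $k_0 = [\Re(z_0)]$, $m_0 = [\Im(z_0)]$, so the factor $\varepsilon$ is carried even for distant $k$; it then compares the $k$-sum to $\sum_{m\ne 0} m^{-2}$ after discarding the vertical part of the denominator. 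You instead reduce $|z_k - p|$ to $|z_0 + k - p|$ (losing only a factor $2$ via the triangle inequality and $\varepsilon \leq |z_0+k-p|$), and then split the $k$-sum directly by horizontal distance $|x_0+k-p_R|$: the at-most-three nearby terms are controlled by the hypothesis $\dist(z_0,\tilde \PP)\geq\varepsilon$, while for the far terms you drop the vertical part $(y_0-p_I)^2$ and compare to $2\sum_{n\geq 1} n^{-2}$. The effect is the same --- your remark about discarding the vertical component once the horizontal distance exceeds $1$ is exactly what the paper achieves by dropping $|m-m_0|$ --- but your version handles all three lattice geometries with a single estimate and keeps $\varepsilon$ out of the far tail. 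The choice $a_p = \lambda b_p$ with a fixed summable positive sequence $(b_p)$ and small $\lambda$ keeps every $a_p$ nonzero as required, and the final arithmetic $\sum_p|a_p| < \varepsilon^3/(8C)$ gives $\sum_k|e(z_k)| < \varepsilon/2$ as claimed.
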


\begin{proof}
Take a small $\varepsilon > 0$ and points $z_0, \ldots , z_n$ fulfilling the condition stated in the lemma. Then $z_k = z_0 + k + \zeta_k$, where $\zeta_k \in \C$, $|\zeta_k| < \varepsilon/2 < 1$.

First, consider case (ii), $\PP = \Z$ (the case $\PP=\Z_+$ is analogous). Then
\begin{equation}\label{eq:e}
\sum_{k=0}^n |e(z_k)| \leq \sum_{k=0}^n\sum_{p \in \Z} \frac{|a_p|}{|z_k-p|^2}.
\end{equation}
Note that
\begin{equation}\label{eq:eps1}
|z_k - p| = |z_0 + k + \zeta_k - p| \geq |z_0 + k - p| - |\zeta_k| \geq \varepsilon - \frac{\varepsilon}{2} = \frac{\varepsilon}{2},
\end{equation}
because $|z_0 + k- p| \geq \dist(z_0, \tilde \PP) \geq \varepsilon$. Let
\[
k_0 = [\Re(z_0)].
\]
We will show that
\begin{equation}\label{eq:eps}
|z_k - p| \geq \frac{\varepsilon}{4}|k - p + k_0|.
\end{equation}
Indeed, if $|k - p + k_0| \leq 2$, then \eqref{eq:eps} holds, because by \eqref{eq:eps1},
\[
|z_k - p|\geq \frac{\varepsilon}{2} \geq\frac{\varepsilon}{4}|k - p +k_0|.
\]
Otherwise,
$|k - p + k_0| \geq 3$, so
\begin{align*}
|z_k - p| &= |z_0 + k + \zeta_k - p|\\ &\geq |\Re(z_0 + k + \zeta_k  - p)| \\ 
&= |\Re(z_0) + k + \Re(\zeta_k)  - p|\\ 
&\geq |k - p + k_0| - |\Re(z_0) - k_0| - |\Re(\zeta_k)|\\
&\geq |k - p + k_0| - 2\\ 
&\geq \frac{1}{3}| k - p + k_0|,
\end{align*}
which shows \eqref{eq:eps} if $\varepsilon$ is sufficiently small. 
By \eqref{eq:e}, \eqref{eq:eps1} for $k=p-k_0$ and \eqref{eq:eps} for $k\in \Z\setminus \{p-k_0\}$,
\begin{align*}
\sum_{k=0}^n |e(z_k)| &\leq \sum_{p \in \Z}|a_p|\left(\frac{4}{\varepsilon^2} + \frac{16}{\varepsilon^2} \sum_{k\in\Z \setminus \{p-k_0\}}\frac{1}{|k_0 + k - p|^2}\right)\\
&\leq \sum_{p \in \Z}|a_p|\left(\frac{4}{\varepsilon^2} + \frac{16}{\varepsilon^2} \sum_{k\in\Z \setminus \{0\}}\frac{1}{k^2}\right)< \frac{\varepsilon}{2}\\
\end{align*}
if $|a_p|$ are chosen sufficiently small. 

Now we consider case (iii). Then, writing $p = j+im$ for $j, m \in \Z$, we have
\begin{equation}\label{eq:e''}
\sum_{k=0}^n |e(z_k)| \leq \sum_{k=0}^n\sum_{j \in \Z} \sum_{m \in \Z}\frac{|a_{j+im}|}{|z_k-j-im|^2}.
\end{equation}
Similarly as previously, 
\begin{equation}\label{eq:eps1''}
|z_k - j - im| = |z_0 + k + \zeta_k - j -im| \geq |z_0 + k - j -im| - |\zeta_k| \geq \varepsilon - \frac{\varepsilon}{2} = \frac{\varepsilon}{2},
\end{equation}
since $|z_0 - j +k - im| \geq \dist(z_0, \tilde \PP) \geq \varepsilon$. Set
\[
k_0 = [\Re(z_0)], \qquad m_0 = [\Im(z_0)].
\]
We will show that
\begin{equation}\label{eq:eps''}
|z_k - j -i m| \geq \frac{\varepsilon}{8}(|k+k_0-j| + |m - m_0|).
\end{equation}
To prove \eqref{eq:eps''}, we note that if $|k+k_0-j| + |m - m_0| \leq 4$, then by \eqref{eq:eps1''},
\[
|z_k - j - im|\geq \frac{\varepsilon}{2} \geq\frac{\varepsilon}{8}(|k+k_0-j| + |m - m_0|),
\]
which gives \eqref{eq:eps''}. Otherwise, $|k+k_0-j| + |m - m_0| \geq 5$ and
\begin{align*}
|z_k - j -im| &= |z_0 + k + \zeta_k - j -im|\\ &\geq \frac{\sqrt{2}}{2}(|\Re(z_0 + k + \zeta_k - j - im)| + |\Im(z_0 + k + \zeta_k  - j - im)|)\\
&= \frac{\sqrt{2}}{2}(|\Re(z_0) + k + \Re(\zeta_k) -j| + |\Im(z_0) + \Im(\zeta_k) - m)|)\\
&\geq \frac{\sqrt{2}}{2}(|k+k_0-j| + |m-m_0|- |\Re(z_0) - k_0| - |\Im(z_0) - m_0| - |\Re(\zeta_k)| - |\Im(\zeta_k)|)\\
&\geq \frac{\sqrt{2}}{2}(|k+k_0-j| + |m-m_0|-4)\\
&\geq \frac{\sqrt{2}}{10}(|k+k_0-j| + |m-m_0|),
\end{align*}
which shows \eqref{eq:eps''} for sufficiently small $\varepsilon$. By \eqref{eq:e''}, \eqref{eq:eps1''} and \eqref{eq:eps''},
\begin{align*}
\sum_{k=0}^n |e(z_k)| &\leq  \sum_{j \in \Z} \sum_{m \in \Z} |a_{j+im}|\left( \frac{4}{\varepsilon^2} + \frac{64}{\varepsilon^2}\sum_{k\in\Z \setminus \{j-k_0\}}\frac{1}{(|k+k_0-j| + |m - m_0|)^2}\right)\\
&\leq   \sum_{j \in \Z} 
\sum_{m \in \Z} |a_{j+im}| \left( \frac{4}{\varepsilon^2} + \frac{64}{\varepsilon^2}
\sum_{k\in\Z \setminus \{0\}}\frac{1}{k^2}\right) < \frac{\varepsilon}{2}
\end{align*}
if $|a_{j+im}|$ are sufficiently small.

Finally, case (i) is proved exactly as case (iii) by setting $j=0$.
\end{proof}

For a small $\varepsilon > 0$ let 
\[
V = \C \setminus \bigcup_{p\in\tilde\PP} \D(p, \varepsilon), \qquad
\tilde V = \C \setminus \bigcup_{p\in\tilde\PP} \D(p, 2\varepsilon).
\]

\begin{lem} \label{lem:2}
For every $\varepsilon > 0$, there exist non-zero complex numbers $a_p$, $p \in \PP$, such that for every $z \in \tilde V$ and every $n \geq 0$, 
\[
|f^n(z)-z-n| < \frac{\varepsilon}{2}.
\]
In particular, this implies $f^n(z) \in V$ for every $n \geq 0$.
\end{lem}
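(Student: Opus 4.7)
The plan is to prove the estimate $|f^n(z) - z - n| < \varepsilon/2$ by induction on $n$, feeding the orbit into Lemma~\ref{lem:1}. The base case $n = 0$ is trivial since $f^0(z) = z$. For the inductive step, suppose $|f^k(z) - z - k| < \varepsilon/2$ for $k = 0, 1, \ldots, n$. By the telescoping identity
\[
f^{n+1}(z) - z - (n+1) = \sum_{k=0}^{n} e(f^k(z))
\]
recorded just before Lemma~\ref{lem:1}, it suffices to bound $\sum_{k=0}^{n} |e(f^k(z))|$ by $\varepsilon/2$. Setting $z_k = f^k(z)$, the point $z_0 = z$ lies in $\tilde{V}$, so $\dist(z_0, \tilde{\PP}) \geq 2\varepsilon \geq \varepsilon$; and the inductive hypothesis is exactly the condition $|z_k - z_0 - k| < \varepsilon/2$ demanded by Lemma~\ref{lem:1}. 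Taking the coefficients $a_p$ to be those supplied by that lemma for the present $\varepsilon$ (replacing $\varepsilon$ by a smaller value first if Lemma~\ref{lem:1} requires it, which only shrinks $\tilde{V}$ and strengthens the desired conclusion), Lemma~\ref{lem:1} yields the required bound, closing the induction.

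For the ``in particular'' assertion, I argue by contradiction: suppose $f^n(z) \in \D(q, \varepsilon)$ for some $q \in \tilde{\PP}$. Combined with $|f^n(z) - (z+n)| < \varepsilon/2$ from the main estimate, the triangle inequality gives $|z - (q - n)| < 3\varepsilon/2 < 2\varepsilon$. The key observation is that $\tilde{\PP} = \bigcup_{j \geq 0}(\PP - j)$ satisfies $\tilde{\PP} - \N \subset \tilde{\PP}$: writing $q = p - j$ with $p \in \PP$ and $j \geq 0$, one has $q - n = p - (j + n) \in \tilde{\PP}$. Hence $q - n \in \tilde{\PP}$ and $|z - (q-n)| < 2\varepsilon$, contradicting $z \in \tilde{V}$, which requires $|z - p'| \geq 2\varepsilon$ for every $p' \in \tilde{\PP}$.

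Once Lemma~\ref{lem:1} is in hand, nothing in the present lemma is hard; the argument is a clean induction followed by a short triangle-inequality check. The only step meriting a moment's thought is the closure property $\tilde{\PP} - \N \subset \tilde{\PP}$, which is built into the definition of $\tilde{\PP}$ as the forward orbit of $\PP$ under the translation $z \mapsto z - 1$, precisely so that the same argument works uniformly in all three cases (i)--(iii).
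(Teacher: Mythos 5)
Your proof is correct and follows essentially the same inductive argument as the paper: the telescoping identity feeds the orbit into Lemma~\ref{lem:1}, and the shift-closure $\tilde\PP - \N \subset \tilde\PP$ settles the ``in particular'' clause just as the paper's observation that $z+n\in\tilde V$ with $\dist(\tilde V, \C\setminus V)=\varepsilon$ does. Two minor remarks: the paper's induction additionally tracks the bound $\dist(f^n(z), \tilde V)\le\sum_{k=0}^{n-1}|e(f^k(z))|$, which your version shows is not actually needed; and your parenthetical about ``shrinking $\tilde V$'' has the direction reversed (passing to a smaller $\varepsilon$ shrinks the removed discs and hence \emph{enlarges} $\tilde V$), but since you then prove a stronger bound on a larger set, the logic is unaffected.
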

\begin{proof}
Take $a_p$, $p \in \PP$ from Lemma~\ref{lem:1} and $z \in \tilde V$. We prove, by induction on $n$, the following claim:
\begin{equation}\label{eq:ind}
\dist(f^n(z), \tilde V) \leq \sum_{k=0}^{n-1}|e(f^k(z))| \qquad \text{and} \qquad |f^n(z)-z-n| < \frac{\varepsilon}{2}
\end{equation}
(with the convention $\sum_{k=0}^{-1}=0$ for $n=0$). 
For $n=0$ the claim \eqref{eq:ind} is trivial. Suppose it holds for $0, \ldots, n$. By the definition of $f$,
\[
f^{n+1}(z) = f^n(z)+1 +e(f^n(z)),
\]
so by induction,
\begin{align*}
\dist(f^{n+1}(z), \tilde V) &\leq \dist(f^n(z) + e(f^n(z)), \tilde V)\\
&\leq \dist(f^n(z), \tilde V) + |e(f^n(z))|\\
&\leq \sum_{k=0}^n|e(f^k(z))|
\end{align*}
and $|f^k(z)-z-k| < \varepsilon/2$ for $k =0, \ldots, n$. Hence, the points $z_k = f^k(z)$, $k =0, \ldots, n$ fulfil the assumptions of Lemma~\ref{lem:1}, so there exist $|a_p|$ small enough so that
\[
\sum_{k=0}^n|e(f^k(z))|< \frac{\varepsilon}{2}.
\]
This together with the definition of $f$ implies
\[
|f^{n+1}(z) - z - n - 1| = |\sum_{k=0}^n e(f^k(z))| \leq \sum_{k=0}^n |e(f^k(z))|< \frac{\varepsilon}{2},
\]
which proves the claim \eqref{eq:ind} for $n+1$, completing the induction. Note that \eqref{eq:ind} implies $f^n(z) \in V$ for $k =0, \ldots, n$, since $z+n \in \tilde V$ and $\dist(\tilde V, \C\setminus V) = \varepsilon$. This ends the proof of the lemma.
\end{proof}

\begin{proof}[Proof of Theorem~{\rm \ref{thm:ex}}] Take a small $\delta > 0$. Set $\varepsilon = \delta/2$ and consider a map of the form \eqref{eq:f} for the numbers $a_p$, $p \in \PP$ satisfying the conditions of Lemmas~\ref{lem:1} and~\ref{lem:2}.  
By Lemma~\ref{lem:2}, we have $f^n \to \infty$ as $n \to \infty$ almost uniformly on $\tilde V$ and $\tilde V$ is connected, so $\tilde V = \C \setminus \bigcup_{p \in \tilde \PP} \D(p, \delta)$ is contained in an invariant Baker domain $U$ for $f$. 

In case (i), we have $1 \in \tilde V \subset U$. By Lemma~\ref{lem:2},
\[
|f^{n+1}(1)- f^n(1)| < 1 + \varepsilon < 2
\]
and
\[
\dist(f^n(1), \bd U) \ge \dist(f^n(1), \bd \tilde V) > n + 1 - \frac 5 2 \varepsilon > n
\]
for $n \geq 0$. Hence, by Lemma~\ref{lem:hyp}, 
\[
\varrho_U(f^{n+1}(1), f^n(1)) \leq 2 \ln \frac{1}{1 - |f^{n+1}(1)- f^n(1)|/\dist(f^n(1), \bd U)} < 2 \ln \frac{1}{1 - 2/n} \to 0
\]
as $n \to \infty$. Therefore, Theorem~A implies that $\varrho_U(f^{n+1}(z), f^n(z)) \to 0$ as $n \to \infty$ for $z \in U$ and $f|_U$ is of parabolic~I type. 
For instance, $W = \{z \in \C: \Re(z) > 1\}$ can be taken to be a nice simply connected absorbing domain in $U$ for $f$.  

Consider now case (ii). Then $ik \in \tilde V \subset U$ for every positive integer $k$, and by Lemma~\ref{lem:2},
\[
1/2 < 1 - \varepsilon < |f^{n+1}(ik)- f^n(ik)| < 1 + \varepsilon < 2
\]
and
\[
k/2 < k - \frac 5 2 \varepsilon < \dist(f^n(ik), \bd \tilde V)  \le \dist(f^n(ik), \bd U)  < k + \frac 5 2 \varepsilon < 2k
\]
for $n \geq 0$. Hence, by Theorem~\ref{thm:rippon} for $K = \{ik, f(ik)\}$, 
\[
\varrho_U(f^{n+1}(ik), f^n(ik)) \ge \frac 1 C \frac{|f^{n+1}(ik) - f^n(ik)|}{\dist(f^n(ik), \bd U)} > \frac{1}{4Ck}> 0,
\]
so Theorem~A implies $\varrho_U(f^{n+1}(z), f^n(z)) \not\to 0$ as $n \to \infty$ for $z \in U$. Moreover, by Lemma~\ref{lem:hyp}, 
\[
\varrho_U(f^{n+1}(ik), f^n(ik)) \leq 2 \ln \frac{1}{1 - |f^{n+1}(ik)- f^n(ik)|/\dist(f^n(ik), \bd U)} < 2 \ln \frac{1}{1 - 4/k} \to 0
\]
as $k \to \infty$, which shows $\inf_{z\in U}\lim_{n\to\infty}\varrho_U(f^{n+1}(z), f^n(z))=0$ (cf.~Remark~\ref{rem:limit}).

Consider now case (iii) and take $z \in U$. Then there exists $p \in\PP$, such that 
\[|z - p| \leq \frac{\sqrt{2}} 2.
\]
Lemma~\ref{lem:2} implies that 
\[
|f(z) - p| < 1 + \frac{\sqrt{2}}2 + \frac{\varepsilon}2 < 2
\]
and 
\[
|f(z)- z| > 1 - \varepsilon > 1/2.
\]
Note that $p, p+1 \notin U$, so by the Schwarz--Pick Lemma,
\[
\varrho_U(f(z), z) \ge \varrho_{\C \setminus \{p, p+1\}}(f(z), z) = \varrho_{\C \setminus \{0, 1\}}(w, w'),
\]
where $w = f(z) - p$, $w' = z - p$ and $w \in \D(0, 2)$, $w' \in \D(0, \sqrt{2}/2)$, $|w-w'| > 1/2$. This easily implies that if $\gamma$ is the hyperbolic geodesic connecting $w$ and $w'$ in $\C \setminus \{0, 1\}$, then there is $\tilde \gamma \subset \gamma$, such that $\tilde \gamma \subset \D(0, 2)$ and the Euclidean length of $\tilde\gamma$ is larger than $1/2$. By Lemma~\ref{lem:CG}, there exists $c > 0$ (independent of $z$), such that $\varrho_{\C \setminus \{0, 1\}}(u) > c$ for every $u \in \D(0, 2)\setminus \{0, 1\}$. Hence, 
\[
\varrho_{\C \setminus \{0, 1\}}(w, w') = \int_\gamma \varrho_{\C \setminus \{0, 1\}}(u)du \ge \int_{\tilde \gamma} \varrho_{\C \setminus \{0, 1\}}(u)du > \frac{c}{2},
\]
which shows
$\inf_{z\in U}\varrho_U(f(z), z) > c/2 > 0$, so $\inf_{z\in U}\lim_{n\to\infty}\varrho_U(f^{n+1}(z), f^n(z))>0$ and, by Proposition~\ref{prop:hyper}, $f|_U$ is of hyperbolic type.

In all three cases (i)--(iii), the boundary of the square 
\[
Q_k = \{z \in \C: |\Re(z)| \leq k +1/2, \; |\Im(z)| \leq k +1/2\}
\]
is contained in $U$ for every integer $k \geq 0$, which shows that $\{\infty\}$ is a singleton component of $\clC \setminus U$. Moreover, in cases~(ii) and~(iii), if $\gamma = \bd Q_0$, then $f^n(\gamma)$ is not contractible in $U$. To see this, notice that by Lemma~\ref{lem:2}, the points of $f^n(\gamma)$ are $\varepsilon/2$-close to the suitable points of the boundary of the square $Q_0 + n$, which winds once around the pole $n$ of $f$. Similarly as in the proof of Theorem~B, this implies that $f^n(\gamma)$ winds once around the pole $n$, so it is not contractible in $U$. By Proposition~\ref{prop:simply}, we conclude that in cases~(ii) and~(iii) there is no simply connected absorbing domain $W$ in $U$ for $f$.

\end{proof}

\bibliography{absorb}

\begin{thebibliography}{10}

\bibitem{baker75}
Irvine~N. Baker.
\newblock The domains of normality of an entire function.
\newblock {\em Ann. Acad. Sci. Fenn. Ser. A I Math.}, 1(2):277--283, 1975.

\bibitem{baker-pommer}
Irvine~N. Baker and Christian Pommerenke.
\newblock On the iteration of analytic functions in a halfplane. {II}.
\newblock {\em J. London Math. Soc. (2)}, 20(2):255--258, 1979.

\bibitem{barfag}
Krzysztof Bara{\'n}ski and N{\'u}ria Fagella.
\newblock Univalent {B}aker domains.
\newblock {\em Nonlinearity}, 14(3):411--429, 2001.

\bibitem{bfjk}
Krzysztof {Bara\'nski}, N\'uria {Fagella}, Xavier {Jarque}, and Bogus{\l}awa
  {Karpi\'nska}.
\newblock On the connectivity of the {Julia} sets of meromorphic functions.
\newblock arxiv:1206.6667, June 2012. To appear in Invent. Math.

\bibitem{bergweiler}
Walter Bergweiler.
\newblock Iteration of meromorphic functions.
\newblock {\em Bull. Amer. Math. Soc. (N.S.)}, 29(2):151--188, 1993.

\bibitem{bergweiler2}
Walter Bergweiler.
\newblock Newton's method and {B}aker domains.
\newblock {\em J. Difference Equ. Appl.}, 16(5-6):427--432, 2010.

\bibitem{berdrasin}
Walter Bergweiler, David Drasin, and James~K. Langley.
\newblock Baker domains for {N}ewton's method.
\newblock {\em Ann. Inst. Fourier (Grenoble)}, 57(3):803--814, 2007.

\bibitem{bergzheng}
Walter Bergweiler and Jian-Hua Zheng.
\newblock Some examples of {B}aker domains.
\newblock {\em Nonlinearity}, 25(4):1033--1044, 2012.

\bibitem{bonfert}
Petra Bonfert.
\newblock On iteration in planar domains.
\newblock {\em Michigan Math. J.}, 44(1):47--68, 1997.

\bibitem{buffruc}
Xavier Buff and Johannes R{\"u}ckert.
\newblock Virtual immediate basins of {N}ewton maps and asymptotic values.
\newblock {\em Int. Math. Res. Not.}, pages Art. ID 65498, 18, 2006.

\bibitem{carlesongamelin}
Lennart Carleson and Theodore~W. Gamelin.
\newblock {\em Complex dynamics}.
\newblock Universitext: Tracts in Mathematics. Springer-Verlag, New York, 1993.

\bibitem{cowen}
Carl~C. Cowen.
\newblock Iteration and the solution of functional equations for functions
  analytic in the unit disk.
\newblock {\em Trans. Amer. Math. Soc.}, 265(1):69--95, 1981.

\bibitem{denjoy}
Arnaud {Denjoy}.
\newblock {Sur l'it\'eration des fonctions analytiques.}
\newblock {\em {C. R. Acad. Sci., Paris}}, 182:255--257, 1926.

\bibitem{faghen}
N{\'u}ria Fagella and Christian Henriksen.
\newblock Deformation of entire functions with {B}aker domains.
\newblock {\em Discrete Contin. Dyn. Syst.}, 15(2):379--394, 2006.

\bibitem{konig}
Harald K{\"o}nig.
\newblock Conformal conjugacies in {B}aker domains.
\newblock {\em J. London Math. Soc. (2)}, 59(1):153--170, 1999.

\bibitem{marden-pommer}
Albert Marden and Christian Pommerenke.
\newblock Analytic self-mappings of infinite order of {R}iemann surfaces.
\newblock {\em J. Analyse Math.}, 37:186--207, 1980.

\bibitem{mayer}
Sebastian Mayer and Dierk Schleicher.
\newblock Immediate and virtual basins of {N}ewton's method for entire
  functions.
\newblock {\em Ann. Inst. Fourier (Grenoble)}, 56(2):325--336, 2006.

\bibitem{pommer}
Christian Pommerenke.
\newblock On the iteration of analytic functions in a halfplane.
\newblock {\em J. London Math. Soc. (2)}, 19(3):439--447, 1979.

\bibitem{rippon}
Philip~J. Rippon.
\newblock Baker domains of meromorphic functions.
\newblock {\em Ergodic Theory Dynam. Systems}, 26(4):1225--1233, 2006.

\bibitem{rippon-survey}
Philip~J. Rippon.
\newblock Baker domains.
\newblock In {\em Transcendental dynamics and complex analysis}, volume 348 of
  {\em London Math. Soc. Lecture Note Ser.}, pages 371--395. Cambridge Univ.
  Press, Cambridge, 2008.

\bibitem{ripponstallardII}
Philip~J. Rippon and Gwyneth~M. Stallard.
\newblock Families of {B}aker domains. {II}.
\newblock {\em Conform. Geom. Dyn.}, 3:67--78 (electronic), 1999.

\bibitem{ripponstallard}
Philip~J. Rippon and Gwyneth~M. Stallard.
\newblock Singularities of meromorphic functions with {B}aker domains.
\newblock {\em Math. Proc. Cambridge Philos. Soc.}, 141(2):371--382, 2006.

\bibitem{valiron}
Georges {Valiron}.
\newblock {Sur l'it\'eration des fonctions holomorphes dans un demi-plan.}
\newblock {\em {Bull. Sci. Math., II. S\'er.}}, 55:105--128, 1931.

\bibitem{valiron-book}
Georges Valiron.
\newblock {\em Fonctions analytiques}.
\newblock Presses Universitaires de France, Paris, 1954.

\bibitem{wolff}
Julius Wolff.
\newblock Sur l'it\'eration des fonctions holomorphes dans un demi-plan.
\newblock {\em Bull. Soc. Math. France}, 57:195--203, 1929.

\end{thebibliography}

\end{document}